\crefname{hypothesis}{Hypothesis}{Hypotheses}
\title{Linear, decoupled and positivity-preserving staggered mesh schemes for general dissipative systems with arbitrary energy distributions\thanks{
Submitted to the editors DATE.
\funding{We would like to acknowledge the assistance of volunteers in putting together this example manuscript and supplement. The first author was supported by the Natural Science Foundation of Shandong Province (Grant Nos: ZR2023YQ007 and ZR2024MA077), the Taishan Scholars Program of Shandong Province of China (Grant No: tsqn202408140). The second author was partially supported by the Hong Kong Polytechnic University Postdoctoral Research Fund 1-W22P.  The third author was supported by the National Natural Science Foundation of China (Grant Nos: 12271302 and 12131014) and the Shandong Provincial Natural Science Foundation for Outstanding Youth Scholar (Grant No: ZR2024JQ030). 
}}}
\author{Zhengguang Liu\thanks{School of Mathematics and Statistics, Shandong Normal University, Jinan, Shandong, 250358, China. 
  (\email{liuzhg@sdnu.edu.cn}).}
 \and Nan Zheng\thanks{Corresponding author. Department of Applied Mathematics, The Hong Kong Polytechnic University, Hung Hom, Hong Kong. 
  	(\email{nanzheng@polyu.edu.hk}).}
  	  \and  Xiaoli Li\thanks{Corresponding author. School of Mathematics, Shandong University, Jinan, Shandong, 250100, China.
  		(\email{xiaolimath@sdu.edu.cn}).}
}
\begin{document}
\maketitle

\begin{abstract}
In this paper, we develop a novel staggered mesh (SM) approach for general nonlinear dissipative systems with arbitrary energy distributions (including cases with known or unknown energy lower bounds). Based on this framework, we propose several second-order semi-discrete schemes that maintain linearity, computational decoupling, and unconditional energy stability. Firstly, for dissipative systems with known energy lower bounds, we introduce a positive auxiliary variable $V(t)$ to substitute the total energy functional, subsequently discretizing it on staggered temporal meshes to ensure that the energy remains non-increasing regardless of the size of time step. The newly developed schemes achieve full computational decoupling, maintaining essentially the same computational expense as conventional implicit-explicit methods while demonstrating significantly improved accuracy. Furthermore, we rigorously establish the positivity preservation of the discrete variable $V^{n+1/2}$ which is a crucial property ensuring numerical stability and accuracy. Theoretical analysis confirms second-order temporal convergence for the proposed SM schemes. Secondly, for dissipative systems lacking well-defined energy lower bounds, we devise an alternative auxiliary variable formulation and extend the SM framework to maintain unconditional energy stability while preserving numerical effectiveness and accuracy. Finally, comprehensive numerical experiments, including benchmark problem simulations, validate the proposed schemes' efficacy and demonstrate their superior performance characteristics.
\end{abstract}

\begin{keywords}
Dissipative systems; staggered mesh approach; energy dissipation law; positive property; convergence.
 \end{keywords}

\begin{AMS}
	 65M06, 65M12, 65Z05
\end{AMS}
\section{Introduction}
Dissipative systems are a class of important and broadly applicable dynamical systems that are characterized by the continuous flow of energy or matter into and out of the system, leading to a non-equilibrium steady state. They have a very wide range of applications across various scientific and engineering disciplines due to their ability to maintain order and complexity far from equilibrium. The class of dissipative systems described above includes not only gradient flows \cite{cheng2020global,huang2020highly,xiaoli2019energy,shen2019new} but also other dissipative systems that do not  have the gradient structure, such as Navier-Stokes equations \cite{diegel2017convergence,huang2022new,xiaoli2020error}, viscous Burgers equations \cite{beck2011using,gottlieb2012stability}, reaction-diffusion equations \cite{fu2023high,liu2021structure} etc.

Consider the following class of dissipative systems
\begin{equation}\label{dissipative-systems}
	\frac{\partial u}{\partial t}+\mathcal{A}u+g(u)=0,
\end{equation}
where $u$ is a scalar or vector function, $\mathcal{A}$ is a positive definite operator and $g(u)$ is a semi-linear or quasi-linear operator with lower-order derivatives. The above equation satisfies the following dissipative energy law:
\begin{equation}\label{dissipative-energy-law}
	\frac{dE_{tot}}{dt}=-\mathcal{K}(u),
\end{equation}
where $E_{tot}(u)$ is a free energy and $\mathcal{K}(u)\geq0$ for all $u$. 

It's worth noting that keeping the discrete energy dissipation law to eliminate non-physics numerical solutions is very crucial to simulate dissipative systems. In the last few decades of studies, a series of popular numerical schemes have been proposed to numerically solve various dissipative systems. The fully explicit and fully implicit schemes \cite{du1991numerical,elliott1987numerical} dominated early numerical simulation studies of dissipative systems. The fully explicit scheme is simple but has very strict requirements on the time step size to maintain the structure of the original systems. The implicit scheme can usually keep the physical structure but requires harsh time-step limitations to ensure the uniqueness of the numerical solution, and because of the implicit treatment of nonlinear terms, nonlinear iterations are required for the computation. Convex splitting approach is one of the well-known methods for constructing an unconditionally unique and energy stable numerical scheme, which was first proposed by Eyre \cite{eyre1998unconditionally} in the study of the Cahn-Hilliard equation. Later, this method is extended and successfully applied to other dissipative systems, especially to various phase field models \cite{chen2012linear,furihata2001stable,gomez2011provably,guan2014convergent}. However, the convex splitting methods are not easy to simulate complex dissipative systems and its numerical schemes are nonlinear schemes in most cases which need to be solved iteratively. The stabilized schemes have been proved to be linear, energy stable schemes and still effective at large step sizes.  Chen and Shen \cite{chen1998applications} firstly gave a stabilized semi-implicit scheme based on the Fourier spectral method by treating the linear terms implicitly and the nonlinear terms explicitly for the Cahn-Hilliard equation. Since then, many stabilized schemes \cite{li2017second,li2017stabilization,shen2010numerical,tang2020efficient} have been developed and widely used in various dissipative systems. Xu et al. \cite{xu2019stability} found that the stabilized semi-implicit scheme provides an approximation to the original solution of the Allen-Cahn model at a delayed time. Du et al. \cite{du2021maximum} proposed an exponential time differencing (ETD) approach for a class of semilinear parabolic equations to develop first- and second-order accurate temporal discretization schemes, that satisfy the maximum bound principle unconditionally in the time-discrete setting. Besides, Yang et al. \cite{yang2018linear,yang2017numerical} gave a class of numerical schemes based on invariant energy quadraticization methods and the basic idea is to rewrite the energy functional into a quadratic form by introducing an auxiliary variable to obtain a new equivalent system. Recently, the SAV approach introduced by Shen et al. \cite{shen2018scalar,shen2019new} has been attracted much attention in numerical solutions for gradient flows due to its inherent advantage of preserving energy dissipation law. In \cite{huang2022new}, a high-order generalized SAV (GSAV) approach was proposed for general dissipative systems. In addition, some other numerical methods can also be used to solve dissipative systems efficiently. We refer to \cite{cheng2020new,cheng2015fast,guo2016local,li2022new,qian2020gpav,qiao2015stability} and references therein for a more complete literature on this subject.

The SAV and its extended schemes \cite{akrivis2019energy,cheng2020new,hou2019variant} mostly require twice as much computation as the semi-implicit schemes. Furthermore, although the introduced auxiliary variables are positive, most of them can not maintain the positive property for the discrete auxiliary variables, which is very important for long time simulation and correct evolution of phase transitions. The GSAV schemes \cite{huang2022new,huang2020highly,zhang2022generalized} only require solving one linear system with constant coefficients at each time step and their auxiliary variable is always positive. However, the convergence order of the discrete energy is only first order which will affect the dynamic evolution of energy dissipation. In this paper, we consider a novel linear and second-order staggered mesh (SM) approach for dissipative systems and carry out a rigorous error analysis. The new proposed SM schemes enjoy several distinct advantages, including:
\begin{itemize}
	\item the discrete variables can be fully decoupled in terms of the calculation, so its computational cost is essentially the same as the usual implicit-explicit schemes, but the accuracy is much higher than the implicit-explicit schemes;
	\item the discrete energy is unconditionally stable and achieves second-order convergence rate, which is higher than the classical GSAV scheme with only first-order accuracy for the modified energy;
	\item the extended scheme has been proposed and verified to be effective for simulating dissipative systems with unknown energy lower bound;
	\item rigorous error estimates with a general framework can be established for the constructed SM schemes. 
\end{itemize}

The rest of this paper is organized as follows. In Section 2, we present the SM approach for dissipative systems. We first give a general SM scheme to discretize dissipative systems with known energy lower bound. Secondly, an extended SM scheme is considered to simulate dissipative systems with unknown energy lower bound. In Section 3, we carry out an error analysis for the proposed SM scheme. We present a comparison of SM approach with GSAV and Lagrange multiplier approaches in Section 4, and provide ample numerical examples including some benchmark problems to validate its effectiveness.

\section{Staggered mesh approach}\label{SMmethod}
In this section, we will develop a novel staggered mesh approach and construct several linear and efficient schemes based on the staggered meshes in time for general dissipative systems with known and unknown energy lower bounds.

\subsection{Staggered mesh scheme for dissipative systems with known energy lower bound}    
The majority of dissipative systems have a known energy lower bound, e.g., there exists a constant $C_0>0$ such that $E(u)=E_{tot}(u)+C_0>0$. Introduce 
an auxiliary variable $V(t)=E(u)$ to satisfy $V(t)>0$. We first take the logarithm of the auxiliary variable  $V(t)$ and then take the time derivative of $\ln V(t)$ to obtain
\begin{equation}\label{Modified-dissipative-energy-law2}
	\frac{d\ln V}{dt}=\frac1V\frac{dV}{dt}=-\frac{\mathcal{K}(u)}{E(u)}.
\end{equation}  

Introduce another auxiliary variable $\eta(t)$ which satisfies $\eta(t)\equiv1$ at the continuous level and combine the above equation \eqref{Modified-dissipative-energy-law2} with the dissipative systems \eqref{dissipative-systems} to obtain the following equivalent system:
\begin{equation}\label{equivalent-dissipative-systems}
	\left\{
	\begin{array}{ll}
		\displaystyle\frac{\partial u}{\partial t}+\mathcal{A}u+g\left(\eta(t) u\right)=0,\\
		\displaystyle\frac{d\ln V}{dt}=-\frac{\mathcal{K}(u)}{E(u)},\\
		\displaystyle \eta(t)=\chi(V).
	\end{array}
	\right.
\end{equation}
Here $\chi(V)$ is a functional of $V$ which is equal to 1 at the continuous level and the selection of its specific forms is rather general.

We notice that the second equation in the energy expression \eqref{Modified-dissipative-energy-law2} is actually equivalent to $\frac{dV}{dt}=-\frac{V(t)}{E(u)}\mathcal{K}(u)$ which is essentially consistent with the equality related to auxiliary variable in the GSAV method in \cite{huang2022new}. However, we replace the equation of auxiliary variables in GSAV by the above second expression in \eqref{equivalent-dissipative-systems}, which can realize that the term on the right-hand side of the equation does not have any related forms of $V(t)$, so that it is beneficial for us to discretize the unknowns $u$ and $V$ on the staggered mesh.

From equation \eqref{Modified-dissipative-energy-law2}, one can see that the above system satisfies a modified energy dissipation law:
\begin{equation*}
	\frac{dV}{dt}=V\frac{d\ln V}{dt}=-\frac{V(t)}{E(u)}\mathcal{K}(u)=-\mathcal{K}(u)\leq0.
\end{equation*}

Before giving a semi-discrete formulation, we let $N>0$ be a positive integer and set
\begin{equation*}
	\Delta t=T/N,\quad t^n=n\Delta t,\quad \text{for}\quad n\leq N.
\end{equation*}

Considering the Crank-Nicolson semi-implicit scheme for the first equation in \eqref{equivalent-dissipative-systems} and discretizing the second equation in \eqref{equivalent-dissipative-systems} on the staggered time mesh, we can obtain the following CN-SM scheme:
\begin{equation}\label{CN-SM1}
	\begin{array}{l}
		\displaystyle\frac{u^{n+1}-u^{n}}{\Delta t}+\mathcal{A}\frac{u^{n+1}+u^{n}}{2}+ g\left(\eta^{n+\frac12}\overline{u}^{n+\frac12}\right)=0,\\
		\displaystyle\frac{\ln V^{n+
				\frac12}-\ln V^{n-\frac12}}{\Delta t}=-\frac{\mathcal{K}(u^n)}{E(u^n)},\\
		\displaystyle \eta^{n+\frac12}=\chi(V^{n+\frac12}),
	\end{array}
\end{equation}
where $\overline{u}^{n+\frac12}$ is any explicit $O(\Delta t^2)$ approximation for $u(t^{n+\frac12})$. For example, we can choose $\overline{u}^{n+\frac12}=\frac32u^n-\frac12u^{n-1}$ for $n\geq1$ or we can use a simple first-order scheme to obtain it, such as the semi-implicit scheme
$$\displaystyle\frac{\overline{u}^{n+\frac12}-u^n}{\Delta t/2}+\mathcal{A}\overline{u}^{n+\frac12}+g(u^n)=0,$$ which has a local truncation error of $O\left(\Delta t^2\right)$.

As for the stability, we have the following result.
\begin{theorem}\label{theorem2}
	Given $V^{n-\frac12}>0$, we have $V^{n+\frac12}>0$, and the CN-SM scheme \eqref{CN-SM1} is unconditionally energy stable in the sense that
	\begin{equation}\label{CN-SM2}
		V^{n+\frac12}-V^{n-\frac12}=(e^{-\frac{\mathcal{K}(u^n)}{E(u^n)}\Delta t}-1)V^{n-\frac12}\leq0.
	\end{equation}
\end{theorem}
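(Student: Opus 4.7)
The plan is to observe that, because the staggered mesh scheme discretizes $\ln V$ rather than $V$ itself, the second equation of \eqref{CN-SM1} can be solved explicitly for $V^{n+1/2}$ in closed form, and the theorem then follows by elementary estimates on the resulting exponential.

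First I would use the hypothesis $V^{n-1/2}>0$ to ensure that $\ln V^{n-1/2}$ is well defined, and then rearrange the second equation of \eqref{CN-SM1} to obtain
\begin{equation*}
\ln V^{n+1/2} = \ln V^{n-1/2} - \frac{\mathcal{K}(u^n)}{E(u^n)}\,\Delta t.
\end{equation*}
Exponentiating gives the closed form
\begin{equation*}
V^{n+1/2} = V^{n-1/2}\,\exp\!\left(-\frac{\mathcal{K}(u^n)}{E(u^n)}\,\Delta t\right),
\end{equation*}
which is immediately strictly positive since it is the product of a positive number $V^{n-1/2}$ and a strictly positive exponential. This takes care of the positivity claim for any $\Delta t>0$, independent of the sign or size of the right-hand side.

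Next I would subtract $V^{n-1/2}$ from both sides to obtain exactly the identity \eqref{CN-SM2}. To conclude $V^{n+1/2}-V^{n-1/2}\le 0$, I would invoke two ingredients already in hand: the assumption $\mathcal{K}(u)\ge 0$ from the dissipation law \eqref{dissipative-energy-law}, and the fact that $E(u)=E_{tot}(u)+C_0>0$ by construction of the shifted energy used to define $V(t)$. Together these make the exponent non-positive, so $\exp(\cdot)-1\le 0$, and multiplying by $V^{n-1/2}>0$ preserves the sign.

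There is no serious obstacle here; the novelty of the scheme is precisely that discretizing $\ln V$ on the staggered mesh \emph{decouples} the update of $V$ from the update of $u$ and makes the $V$-update an explicit closed-form exponential. The only small point worth being careful about is that $E(u^n)>0$ must hold for every admissible $u^n$, which is guaranteed by the standing assumption that a constant $C_0$ with $E(u)=E_{tot}(u)+C_0>0$ exists; I would state this explicitly at the start of the argument so that the division by $E(u^n)$ in the scheme is unambiguously well-defined.
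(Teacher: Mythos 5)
Your proposal is correct and follows essentially the same route as the paper's proof: solve the staggered $\ln V$ update in closed form, exponentiate to get $V^{n+1/2}=e^{-\frac{\mathcal{K}(u^n)}{E(u^n)}\Delta t}\,V^{n-1/2}$, and read off both positivity and the dissipation identity from $\mathcal{K}(u^n)\ge 0$ and $E(u^n)>0$. Your explicit remark that $E(u^n)>0$ must be guaranteed by the shift $C_0$ is a small but welcome point of care that the paper leaves implicit.
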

\begin{proof}
	Noting that $V^{n-\frac12}>0$ and using properties of logarithmic function for the second equation in the CN-SM scheme \eqref{CN-SM1}, we can get
	\begin{equation*}
		\displaystyle\ln\frac{V^{n+\frac12}}{V^{n-\frac12}}=-\frac{\mathcal{K}(u^n)}{E(u^n)}\Delta t.
	\end{equation*}
	
	Taking the exponential function on both sides of the above equation yields
	\begin{equation*}
		\displaystyle V^{n+\frac12}=e^{-\frac{\mathcal{K}(u^n)}{E(u^n)}\Delta t}V^{n-\frac12}.
	\end{equation*}
	
	Since $\mathcal{K}(u^n)\geq0$ and $E(u^n)>0$, we have
	\begin{equation*}
		0<e^{-\frac{\mathcal{K}(u^n)}{E(u^n)}\Delta t}\leq1.
	\end{equation*}
	
	We then immediately conclude 
	\begin{equation*}
		0<V^{n+\frac12}\leq V^{n-\frac12},
	\end{equation*}
	and
	\begin{equation*}
		V^{n+\frac12}-V^{n-\frac12}=(e^{-\frac{\mathcal{K}(u^n)}{E(u^n)}\Delta t}-1)V^{n-\frac12}\leq0,
	\end{equation*}
	which completes the proof.
\end{proof}
\begin{remark} \label{rem_choose chi}
	The choice for $\chi$ is rather general. Some examples are:
	\begin{align*}
		\chi_1(V)&=\frac{V(t)}{E(u)},\quad \chi_2(V)=x^{V(t)-E(u)},\\ \chi_3(V)&=\cos\left(V(t)-E(u)\right), \quad \chi_4(V)=1+V(t)-E(u).
	\end{align*} 
	The corresponding discrete schemes will be
	\begin{equation*}
		\aligned
		&\chi_1(V^{n+\frac12})=\frac{V^{n+\frac12}}{E(\overline{u}^{n+\frac12})},\quad\chi_2(V^{n+\frac12})=x^{V^{n+\frac12}-E(\overline{u}^{n+\frac12})},\\
		&\chi_3(V^{n+\frac12})={\cos}\left(V^{n+\frac12}-E(\overline{u}^{n+\frac12})\right), \quad \chi_4(V)=1+V^{n+\frac12}-E(\overline{u}^{n+\frac12}).
		\endaligned
	\end{equation*} 
\end{remark}

Next, we will show that the above scheme \eqref{CN-SM1} can be efficiently implemented. We first use a simple first-order scheme to obtain $\overline{u}^{\frac12}$, such as the following semi-implicit scheme
\begin{equation*}
	\displaystyle\frac{\overline{u}^{\frac12}-u^0}{\Delta t/2}+\mathcal{A}\overline{u}^{\frac12}+g(u^0)=0,
\end{equation*}
which has a local truncation error of $O(\Delta t^2)$.

Secondly, giving $V^{\frac12}=E(\overline{u}^{\frac12})$, we then calculate $u^1$ by the following classical semi-implicit scheme:
\begin{equation*}
	\displaystyle\frac{u^{1}-u^0}{\Delta t}+\mathcal{A}\frac{u^{1}+u^{0}}{2}+g(\overline{u}^{\frac12})=0.
\end{equation*}

Setting $n=1$, we can then obtain $V^{n+\frac12}$ by the following equation:
\begin{equation}\label{SM-V}
	\displaystyle V^{n+\frac12}=e^{-\frac{\mathcal{K}(u^n)}{E(u^n)}\Delta t}V^{n-\frac12}.
\end{equation}

Once $ V^{n+\frac12}$ is known, we can calculate $\eta^{n+\frac12}$ by the following
$$\eta^{n+\frac12}=\chi(V^{n+\frac12}).$$

Letting $\overline{u}^{n+\frac12}=\frac12(3u^n-u^{n-1})$, we finally calculate $u^{n+1}$ as follows:
\begin{equation*}
	\displaystyle\frac{u^{n+1}-u^{n}}{\Delta t}+\mathcal{A}\frac{u^{n+1}+u^{n}}{2}+g\left(\eta^{n+\frac12}\overline{u}^{n+\frac12}\right)=0.
\end{equation*}

Setting $n=n+1$ and going back to equation \eqref{SM-V}, we can calculate $V^{n+\frac12}$, $\eta^{n+\frac12}$ and $u^{n+1}$ step-by-step in order. 

\begin{remark}
	The discrete variables $V^{n+\frac12}$ and $u^{n+1}$ are calculated decoupled in the CN-SM scheme \eqref{CN-SM1}. It means the above procedure only requires solving one linear equation with constant coefficients as in a standard semi-implicit scheme.
\end{remark}

\begin{remark}
	In order to prevent the uncontrollable factors brought by the exponential function calculation in \eqref{SM-V}, we can redefine $V(t)=\theta E(u)+C>0$ and replace the second equation in \eqref{equivalent-dissipative-systems} as the following:
	\begin{equation*}
		\frac{d\ln V}{dt}=\frac{\theta}{\theta E(u)+C}\frac{dE}{dt}=-\frac{\theta\mathcal{K}(u)}{\theta E(u)+C},
	\end{equation*} 
	where $\theta>0$ is a tunable parameter. When $\mathcal{K}(u)\gg0$, we can choose a relative small $\theta$ to control $\theta\mathcal{K}(u)$. 
\end{remark}

\begin{remark}
	The proposed staggered mesh method is also efficient for the other positive definition of the introduced auxiliary variable $V(t)$. For example, we can choose $V(t)=\sqrt{E(u)}$ or $V(t)=\exp(E_{tot}(u))$ to replace $V(t)=E(u)$. Consequently, the second equation in the equivalent systems \eqref{equivalent-dissipative-systems} will be replaced by the following new equation 
	\begin{equation*}
		\frac{d\ln V}{dt}=-\frac{\mathcal{K}(u)}{2E(u)},
	\end{equation*} 
	or
	\begin{equation*}
		\frac{d\ln V}{dt}=-\mathcal{K}(u).
	\end{equation*} 
\end{remark}
\begin{remark}
	In the proposed CN-SM scheme \eqref{CN-SM1}, we add a control factor $\eta^{n+\frac12}$ to modify $\overline{u}^{n+\frac12}$ in the nonlinear term $g(u)$. Another common way is to add the same control factor to modify the whole nonlinear term $g(\overline{u}^{n+\frac12})$, i.e.
	\begin{equation*}
		\displaystyle\frac{u^{n+1}-u^{n}}{\Delta t}+\mathcal{A}\frac{u^{n+1}+u^{n}}{2}+\eta^{n+\frac12}g\left(\overline{u}^{n+\frac12}\right)=0.
	\end{equation*}
\end{remark}
\begin{remark}
	We can also add a novel stabilized term for the first equation in \eqref{CN-SM1} to improve its stability:
	\begin{equation*}
		\displaystyle\frac{u^{n+1}-u^{n}}{\Delta t}+\mathcal{A}\frac{u^{n+1}+u^{n}}{2}+S\left(\frac{u^{n+1}+u^{n}}{2}-\eta^{n+\frac12}\overline{u}^{n+\frac12}\right)+g\left(\eta^{n+\frac12}\overline{u}^{n+\frac12}\right)=0.
	\end{equation*} 
	
	This will not compromise the energy dissipation property of the scheme.
\end{remark}
\begin{lemma}
	We can also obtain the following new SM scheme by swapping the discrete meshes of $u$ and $V$:
	\begin{equation*}
		\begin{array}{l}
			\displaystyle\frac{u^{n+\frac32}-u^{n+\frac12}}{\Delta t}+\mathcal{A}\frac{u^{n+\frac32}+u^{n+\frac12}}{2}+ g\left(\eta^{n+1}\overline{u}^{n+1}\right)=0,\\
			\displaystyle\frac{\ln V^{n+1}-\ln V^{n}}{\Delta t}=-\frac{\mathcal{K}(u^{n+\frac12})}{E(u^{n+\frac12})},\\
			\displaystyle \eta^{n+1}=\chi(V^{n+1}),
		\end{array}
	\end{equation*}
	where $\overline{u}^{n+1}$ is any explicit  approximation with $O(\Delta t^2)$ for $u(t^{n+1})$. It's not difficult to obtain the following modified energy dissipation law:
	\begin{equation*}
		V^{n+1}-V^{n}=(e^{-\frac{\mathcal{K}(u^{n+\frac12})}{E(u^{n+\frac12})}\Delta t}-1)V^{n}\leq0.
	\end{equation*} 
\end{lemma}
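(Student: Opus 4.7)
The plan is to mirror the argument used in the proof of Theorem~\ref{theorem2}, since the swapped scheme has exactly the same logarithmic auxiliary-variable equation as the original CN-SM scheme, only with the time indices of $u$ and $V$ shifted by half a step. I would proceed by induction on $n$, taking as the base case $V^{0}>0$, which can be secured by initializing $V^{0}=E(u^{0})$ together with the standing assumption $E(u)=E_{tot}(u)+C_{0}>0$.

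For the induction step, assume $V^{n}>0$. Multiplying the second equation of the swapped scheme by $\Delta t$ and using $\ln V^{n+1}-\ln V^{n}=\ln(V^{n+1}/V^{n})$ gives
\begin{equation*}
\ln\frac{V^{n+1}}{V^{n}}=-\frac{\mathcal{K}(u^{n+\frac12})}{E(u^{n+\frac12})}\,\Delta t,
\end{equation*}
which is well defined because $V^{n}>0$. Exponentiating both sides yields the explicit update
\begin{equation*}
V^{n+1}=e^{-\frac{\mathcal{K}(u^{n+\frac12})}{E(u^{n+\frac12})}\,\Delta t}\,V^{n}.
\end{equation*}

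Since $\mathcal{K}(u^{n+\frac12})\ge 0$ and $E(u^{n+\frac12})>0$, the exponent is non-positive and the multiplicative factor therefore lies in $(0,1]$. This simultaneously closes the induction, giving $V^{n+1}>0$, and delivers the claimed identity
\begin{equation*}
V^{n+1}-V^{n}=\bigl(e^{-\frac{\mathcal{K}(u^{n+\frac12})}{E(u^{n+\frac12})}\,\Delta t}-1\bigr)\,V^{n}\le 0.
\end{equation*}

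I do not anticipate any real obstacle: the derivation is essentially a verbatim transcription of Theorem~\ref{theorem2} with indices shifted by one half-step, and no estimate beyond the elementary fact $e^{-x}\in(0,1]$ for $x\ge 0$ is required. The only point that warrants care is the start-up, namely producing $V^{0}>0$ and an $O(\Delta t^{2})$-accurate $\overline{u}^{1}$ needed to march the scheme forward; this can be handled by the same first-order semi-implicit initializer that was spelled out for the original CN-SM scheme.
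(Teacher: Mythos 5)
Your proposal is correct and follows exactly the route the paper intends: the lemma is stated without proof precisely because it is the argument of Theorem~\ref{theorem2} with the roles of the integer and half-integer meshes exchanged, i.e., rewrite the logarithmic update as $\ln(V^{n+1}/V^{n})$, exponentiate, and use $e^{-x}\in(0,1]$ for $x\ge 0$. Your additional remarks on the induction base $V^{0}>0$ and the start-up approximation are consistent with the initialization procedure the paper spells out for the original CN-SM scheme.
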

\begin{lemma}
	Inspired by the GSAV schemes in \cite{huang2020highly}, we can also construct $k$th order IMEX schemes for the expanded system \eqref{equivalent-dissipative-systems} as follows
	\begin{equation}\label{HSM-e1}
		\begin{array}{l}
			\displaystyle\frac{\alpha_k\overline{u}^{n+1}-\beta_k(u^n)}{\Delta t}+\mathcal{A}\overline{u}^{n+1}+g\left(\widehat{u}^{n+1}\right)=0,\\
			\displaystyle\frac{\ln V^{n+\frac12}-\ln V^{n-\frac12}}{\Delta t}=-\frac{\mathcal{K}(u^{n})}{E(u^{n})},\\
			\displaystyle u^{n+1}=\eta_k^{n+1}\overline{u}^{n+1}.
		\end{array}
	\end{equation}
	Here $\alpha_k$, $\beta_k$, $\widehat{u}^{n+1}$ and $\eta_k^{n+1}$ are different for $k$th-order schemes. For example, they can be defined as follows:
	
	BDF2:
	\begin{equation*}
		\aligned
		\alpha_2=\frac32,\quad \beta_2(u^n)=2u^n-\frac12u^{n-1},\quad \widehat{u}^{n+1}=2u^n-u^{n-1},\quad \eta_k^{n+1}=\chi(V^{n+\frac12}).
		\endaligned
	\end{equation*}
	
	BDF3:
	\begin{align*}
		\alpha_3&=\frac{11}{6},\quad \beta_3(u^n)=3u^n-\frac32u^{n-1}+\frac13u^{n-2},\\ \widehat{u}^{n+1}&=3u^n-3u^{n-1}+u^{n-2},\quad \eta_3^{n+1}=1-\left(1-\chi(V^{n+\frac12})\right)^2.
	\end{align*}
	
	BDF4:
	\begin{align*}
		\alpha_4&=\frac{25}{12},\quad \beta_4(u^n)=4u^n-3u^{n-1}+\frac43u^{n-2}-\frac14u^{n-3},\\ \widehat{u}^{n+1}&=4u^n-6u^{n-1}+4u^n-u^{n-3},\quad \eta_4^{n+1}=\eta_3^{n+1}.
	\end{align*}
	Here $\chi(V^{n+\frac12})$ is any explicit  approximation for 1 with $O(\Delta t^2)$, such as $\chi(V^{n+\frac12})=\frac{3V^{n+\frac12}-V^{n-\frac12}}{2E(\overline{u}^{n+1})}$.
\end{lemma}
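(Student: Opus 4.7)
The plan is to split the proof into two largely independent parts: the energy-stability of the $V$-equation, which is identical in form to that of \eqref{CN-SM1}, and a consistency analysis that verifies each BDF-$k$ multistep choice of $(\alpha_k,\beta_k,\widehat{u}^{n+1},\eta_k^{n+1})$ is compatible with order $k$ accuracy.

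For the first part, I would simply observe that the second equation of \eqref{HSM-e1} is \emph{exactly} the same logarithmic staggered update used in \eqref{CN-SM1}, so the derivation in \cref{theorem2} carries over verbatim: given $V^{n-1/2}>0$, exponentiating yields $V^{n+1/2}=\exp\bigl(-\mathcal{K}(u^n)/E(u^n)\,\Delta t\bigr)V^{n-1/2}\in(0,V^{n-1/2}]$. Hence positivity of $V^{n+1/2}$ and the decay $V^{n+1/2}-V^{n-1/2}\le 0$ hold unconditionally for every $k$. Next, well-posedness of the first equation of \eqref{HSM-e1} reduces to solving the linear system $(\alpha_k/\Delta t+\mathcal{A})\,\overline{u}^{n+1}=\text{RHS}(u^n,u^{n-1},\dots)$ which has a unique solution because $\mathcal{A}$ is positive definite; since $\eta_k^{n+1}$ is an already-computed scalar, the final update $u^{n+1}=\eta_k^{n+1}\overline{u}^{n+1}$ is explicit, confirming decoupling.

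For the accuracy part, the coefficients $\alpha_k$, $\beta_k(u^n)$ and the extrapolations $\widehat{u}^{n+1}$ listed in the statement are the \emph{standard} BDF-$k$ weights and Lagrange extrapolants, whose truncation errors are classically $O(\Delta t^k)$. Thus it remains to verify that the ``control factor'' multiplication $u^{n+1}=\eta_k^{n+1}\overline{u}^{n+1}$ preserves order $k$. The key observation is that $\chi(V^{n+1/2})=1+O(\Delta t^2)$ at smooth exact solutions, because $V(t)\equiv E(u(t))$ continuously and the staggered discretization of $\ln V$ is centered and therefore second-order consistent. Consequently $\eta_2^{n+1}=\chi(V^{n+1/2})=1+O(\Delta t^2)$, which matches BDF2; and $\eta_3^{n+1}=\eta_4^{n+1}=1-(1-\chi(V^{n+1/2}))^2=1+O(\Delta t^4)$, which exceeds the required $O(\Delta t^3)$ and $O(\Delta t^4)$ consistency for BDF3 and BDF4. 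This squaring trick is precisely the one used in \cite{huang2020highly} for GSAV, and the same algebraic identity transplants here without modification.

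The main obstacle I anticipate is not the per-step truncation estimate but rather propagating it to a global error bound, because $V^{n+1/2}$ depends nonlinearly on the past $u^n$ through $\mathcal{K}(u^n)/E(u^n)$, and in turn feeds back into $u^{n+1}$ through $\eta_k^{n+1}$. To close the argument one would need an \emph{a priori} bound on $u^n$ (e.g., in the energy norm controlled by $V^{n+1/2}$) together with a Lipschitz-type estimate on the maps $u\mapsto \mathcal{K}(u)/E(u)$ and $V\mapsto\chi(V)$ near the continuous solution, followed by a discrete Gronwall argument on the triple of errors $(e_u^n,e_V^{n+1/2},e_\eta^{n+1/2})$. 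This is a routine but technical extension of the error framework the authors promise in Section~3, and the stated ``lemma'' can legitimately stop at the consistency-plus-stability level once the shared structure with \eqref{CN-SM1} is highlighted.
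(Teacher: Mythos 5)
The paper states this lemma without any proof at all -- it is really a constructive remark transplanted from the GSAV framework -- so there is no authorial argument to compare against; what you have written is, in effect, the justification the authors leave implicit, and it is correct. Your two pillars are exactly the right ones: the $V$-update in \eqref{HSM-e1} is literally the same staggered logarithmic step as in \eqref{CN-SM1}, so positivity and the decay $V^{n+1/2}\le V^{n-1/2}$ follow verbatim from \cref{theorem2} independently of $k$, and the decoupling/well-posedness claim reduces to inverting $\alpha_k/\Delta t+\mathcal{A}$ with $\alpha_k>0$ and $\mathcal{A}$ positive definite. Your consistency accounting is also the key point: since the staggered mesh gives $\chi(V^{n+1/2})=1+O(\Delta t^2)$ (rather than the $1+O(\Delta t)$ of the first-order GSAV auxiliary update in \cite{huang2020highly}), a single squaring $1-(1-\chi)^2=1+O(\Delta t^4)$ already covers BDF3 and BDF4, which is precisely why the authors set $\eta_4^{n+1}=\eta_3^{n+1}$; in \cite{huang2020highly} one needs the $k$-fold power $1-(1-\xi)^k$, so the identity does not transplant ``without modification'' so much as it simplifies. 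Two small caveats: the energy stability you prove is for the modified energy $V$, not the original $E_{tot}$, consistent with the rest of the paper; and the displayed BDF4 extrapolant $\widehat{u}^{n+1}=4u^n-6u^{n-1}+4u^n-u^{n-3}$ contains a typo (it should read $4u^{n-2}$ in the third term) which your appeal to ``standard Lagrange extrapolants'' silently corrects. Your final remark that a genuine order-$k$ convergence proof would require an \emph{a priori} bound, Lipschitz control of $u\mapsto\mathcal{K}(u)/E(u)$ and $V\mapsto\chi(V)$, and a discrete Gronwall argument is accurate -- the paper's Section~3 carries this out only for the second-order scheme \eqref{CN-SM1} -- so stopping at consistency plus stability matches the level of rigor the lemma itself claims.
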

\subsection{Staggered mesh scheme for general dissipative systems with unknown energy lower bound}
Actually for some dissipative systems, their energy lower bound is unknown which means that it is difficult to find a constant $C_0$ to keep $E_{tot}(u)+C_0>0$
 or we do not care whether the energy of the dissipative system is bounded or not. Hence we do not have to introduce the constant $C_0$ to obtain the energy bounded property. To construct an energy stable scheme, we redefine a new auxiliary variable $V(t)=E_{tot}(u)$ and take the time derivative of $V(t)$ to obtain the following energy dissipation law:
\begin{equation}\label{Modified-dissipative-energy-law3}
	\frac{dV}{dt}=\frac{dE_{tot}(u)}{dt}=-\mathcal{K}(u)\leq0.
\end{equation}

Considering that the derivative of the arctangent function is always positive, we take the time derivative of $\arctan V(t)$ to obtain
\begin{equation}\label{Modified-dissipative-energy-law4}
	\frac{d\arctan V}{dt}=\frac{1}{V^2+1}\frac{dV}{dt}=-\frac{\mathcal{K}(u)}{E_{tot}^2(u)+1}\leq0.
\end{equation}     

We combine the above equation \eqref{Modified-dissipative-energy-law4} with the first equation in the equivalent dissipative system \eqref{equivalent-dissipative-systems} to obtain the following new equivalent systems:
\begin{equation}\label{equivalent-dissipative-systems2}
	\left\{
	\begin{array}{ll}
		\displaystyle\frac{\partial u}{\partial t}+\mathcal{A}u+g(\eta u)=0,\\
		\displaystyle\frac{d\arctan V}{dt}=-\frac{\mathcal{K}(u)}{E_{tot}^2(u)+1},\\
		\displaystyle \eta(t)=\chi(V).
	\end{array}
	\right.
\end{equation}

Similarly as before, considering the Crank-Nicolson semi-implicit scheme for the first equation in \eqref{equivalent-dissipative-systems2} and discretizing the second equation in \eqref{equivalent-dissipative-systems2} on a staggered time meshes, we can also obtain the following CN-SM scheme:
\begin{equation}\label{CN-SM3}
	\begin{array}{l}
		\displaystyle\frac{u^{n+1}-u^{n}}{\Delta t}+\mathcal{A}\frac{u^{n+1}+u^{n}}{2}+g\left(\eta^{n+\frac12}\overline{u}^{n+\frac12}\right)=0,\\
		\displaystyle\frac{\arctan V^{n+
				\frac12}-\arctan V^{n-\frac12}}{\Delta t}=-\frac{\mathcal{K}(u^n)}{E_{tot}^2(u^n)+1},\\
		\displaystyle \eta^{n+\frac12}=\chi(V^{n+\frac12}).
	\end{array}
\end{equation}

It is easy to obtain the following results concerning the stability of the above schemes.
\begin{theorem}\label{theorem3}
	The new CN-SM scheme \eqref{CN-SM3} satisfies the following energy inequality 
	\begin{equation}\label{CN-SM4}
		V^{n+\frac12}-V^{n-\frac12}\leq0. 
	\end{equation}
\end{theorem}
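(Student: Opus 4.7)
The plan is to exploit the sign of the right hand side of the discrete $\arctan$ equation together with the strict monotonicity of the arctangent, reducing the energy inequality to a one-line monotonicity argument; unlike Theorem \ref{theorem2}, no logarithm-induced positivity concern appears because the new auxiliary variable $V(t)=E_{tot}(u)$ need not be positive.

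First, I would isolate the second equation of the CN-SM scheme \eqref{CN-SM3} and multiply through by $\Delta t$ to obtain
\begin{equation*}
\arctan V^{n+\frac12}-\arctan V^{n-\frac12}=-\frac{\mathcal{K}(u^n)}{E_{tot}^2(u^n)+1}\,\Delta t.
\end{equation*}
Next, I would invoke the two structural hypotheses carried over from \eqref{dissipative-energy-law}: that $\mathcal{K}(u)\geq 0$ for all $u$, and that the denominator $E_{tot}^2(u^n)+1\geq 1>0$ is manifestly positive irrespective of the sign of $E_{tot}(u^n)$ (this is precisely why the arctangent reformulation was adopted for the unknown-lower-bound case). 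Combining these, the right-hand side is non-positive, so
\begin{equation*}
\arctan V^{n+\frac12}\leq \arctan V^{n-\frac12}.
\end{equation*}

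Finally, I would conclude by appealing to the fact that $\arctan:\mathbb{R}\to(-\pi/2,\pi/2)$ is strictly increasing and hence order-preserving on the whole real line, so the inequality lifts to $V^{n+\frac12}\leq V^{n-\frac12}$, which is exactly \eqref{CN-SM4}. There is essentially no main obstacle: the whole argument is a direct consequence of the construction, and the only subtlety worth flagging in the write-up is that the strict positivity of $E_{tot}^2(u^n)+1$ is automatic (it is why we introduced the ``$+1$'' in \eqref{Modified-dissipative-energy-law4}), so no assumption on the magnitude or sign of $E_{tot}(u^n)$ is needed — in contrast to Theorem \ref{theorem2}, where the recursion requires $V^{n-1/2}>0$.
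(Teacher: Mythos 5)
Your proposal is correct and follows essentially the same route as the paper: both arguments multiply the staggered $\arctan$ equation by $\Delta t$, use $\mathcal{K}(u^n)\geq 0$ and $E_{tot}^2(u^n)+1>0$ to conclude $\arctan V^{n+\frac12}\leq \arctan V^{n-\frac12}$, and then invoke the monotonicity of the arctangent to obtain \eqref{CN-SM4}. Your added remark that no positivity of $V^{n-\frac12}$ is needed (in contrast to Theorem \ref{theorem2}) is accurate and consistent with the paper's motivation for the arctangent reformulation.
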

\begin{proof}
From the second equation in CN-SM scheme \eqref{CN-SM3}, noting that $\mathcal{K}(u^n)\geq0$ and $\frac{1}{E_{tot}^2(u^n)+1}>0$, we easily get the following inequality  
	\begin{equation*}
		\arctan V^{n+\frac12}-\arctan V^{n-\frac12}=-\Delta t\frac{\mathcal{K}(u^n)}{E_{tot}^2(u^n)+1}\leq0.
	\end{equation*}
	Considering that the arctangent function is monotonically increasing, we immediately conclude $V^{n+\frac12}-V^{n-\frac12}\leq0$.
\end{proof}
\begin{remark}
	Noticing that the arctangent function has the range $(-\pi/2,\pi/2)$, we can introduce a small positive constant $\theta$ to redefine $V(t)=\theta E_{tot}$ to guarantee the right side of the second equation in scheme \eqref{CN-SM3} in the correct interval. The second equation in the new equivalent systems \eqref{equivalent-dissipative-systems2} will be rewritten as
	\begin{equation*}
		\frac{d\arctan V}{dt}=-\frac{\theta\mathcal{K}(u)}{\theta^2E_{tot}^2+1},
	\end{equation*}  
	and its discrete formula will become
	\begin{equation*}
		\displaystyle\frac{\arctan V^{n+
				\frac12}-\arctan V^{n-\frac12}}{\Delta t}=-\frac{\theta\mathcal{K}(u^n)}{\theta^2E_{tot}^2(u^n)+1}.
	\end{equation*} 
\end{remark}
\begin{remark}
In general, while models with unbounded free energy are mathematically admissible, they are typically deemed non-physical due to their violation of thermodynamic stability conditions. Consequently, the arctangent-type staggered mesh scheme \eqref{CN-SM3} proposed in this subsection serves as a mathematically consistent algorithm designed to maintain computational stability even when the energy lower bound is unknown. For systems with a well-defined energy lower bound $C_*$, we should update $V^{n+\frac12} = C_* $ in the staggered mesh scheme \eqref{CN-SM3} if $V^{n+\frac12}\leq C_* $ which means the dissipative system has reached a stable state. 
\end{remark}
\section{Error Analysis}
In this section, we shall establish error estimates for the semi-discrete version of the proposed staggered mesh scheme \eqref{CN-SM1}. We first consider the error estimate of the second equation corresponding to $V$ in \eqref{CN-SM1}.

Let us denote
\begin{equation*}
	\aligned
	& e_{u}^{n+1} = u^{n+1} - u(t^{n+1}), \  e_V^{n+\frac12} = V^{n+\frac12}-V(t^{n+\frac12}), \\
	& e_{\mathcal{K}}^{n+1} = \mathcal{K}(u^{n+1} ) - \mathcal{K}(u(t^{n+1} )), \ e_{E}^{n+1} = E(u^{n+1} ) - E(u(t^{n+1} )).
	\endaligned
\end{equation*} 

\medskip

\begin{theorem}\label{theorem_error}
	Suppose that $  u \in W^{3,\infty}(0,T; L^2(\Omega)) \cap W^{2,\infty}(0,T; H^2(\Omega)) $, $g(u) \in C^2(\mathbb{R})$, 
	$ \ln V(t)\in W^{3,\infty}(0,T) $ and $  V(t) \in W^{3,\infty}(0,T)  $, then there exists a positive constant $C$ independent of $\Delta t$ such that
	\begin{equation}\label{main result}
		\aligned
		\| e_u^{k+1} & \|^2 +  \Delta t \sum\limits_{n=1}^{k}  \| \mathcal{A} ^{1/2} \frac{e_u^{n+1}+  e_u^{n}}{2} \|^2 + |e_V^{k+\frac12}|^2 \\
		\leq & C  \Delta t\sum\limits_{n=1}^{k}\left|e_V^{n - \frac12}\right |^2  + C  \Delta t\sum\limits_{n=1}^{k}  |e_{E}^n | ^2 + C \Delta t \sum\limits_{n=1}^{k} \| e_u^{n}  \|^2
		+ \epsilon \Delta t\sum\limits_{n=1}^{k} |e_{\mathcal{K}}^n | ^2  + C(\Delta t)^4. 
		\endaligned
	\end{equation} 
\end{theorem}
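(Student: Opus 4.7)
The plan is to derive two coupled error equations—one for $u$ and one for $V$—test each with a suitable discrete multiplier to generate telescoping quantities on the left, and then add the two inequalities. No discrete Gronwall is invoked inside this theorem; \eqref{main result} is the ``pre-Gronwall'' estimate from which the convergence rate will later be extracted by bounding $|e_E^n|$ and $|e_\mathcal{K}^n|$ in terms of $\|e_u^n\|$ or $\|\mathcal{A}^{1/2}e_u^n\|$. The small prefactor $\epsilon$ in front of $\Delta t\sum|e_\mathcal{K}^n|^2$ is anticipated precisely so that the latter bound, once used, can be absorbed by the coercive term on the left.

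For the $u$-equation, I would evaluate the first equation of \eqref{equivalent-dissipative-systems} at $t^{n+1/2}$, subtract the first line of \eqref{CN-SM1}, and take the $L^2$ inner product with $(e_u^{n+1}+e_u^n)/2$. Under the hypothesis $u\in W^{3,\infty}(0,T;L^2)\cap W^{2,\infty}(0,T;H^2)$, the Crank--Nicolson truncation error is $O((\Delta t)^2)$ in $L^2$. The left-hand side supplies $\tfrac{1}{2\Delta t}(\|e_u^{n+1}\|^2-\|e_u^n\|^2)$ and $\|\mathcal{A}^{1/2}(e_u^{n+1}+e_u^n)/2\|^2$. For the nonlinear difference $g(\eta^{n+1/2}\bar u^{n+1/2})-g(u(t^{n+1/2}))$ I would use $g\in C^2$ together with the continuous identity $\chi(V(t))\equiv 1$ to write
\begin{equation*}
\eta^{n+1/2}-1=\chi(V^{n+1/2})-\chi(V(t^{n+1/2}))=\chi'(\zeta^{n+1/2})\,e_V^{n+1/2},
\end{equation*}
and to decompose $\bar u^{n+1/2}-u(t^{n+1/2})$ into an $O((\Delta t)^2)$ consistency remainder plus a linear combination of $e_u^n$ and $e_u^{n-1}$. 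Young's inequality then distributes the inner product across $\epsilon(\|e_u^{n+1}\|^2+\|e_u^n\|^2)$ and controlled multiples of $|e_V^{n+1/2}|^2$, $\|e_u^n\|^2$, $\|e_u^{n-1}\|^2$, and $(\Delta t)^4$.

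For the $V$-equation, I would integrate $(\ln V)'(t)=-\mathcal{K}(u(t))/E(u(t))$ over $[t^{n-1/2},t^{n+1/2}]$, approximate by the midpoint rule (remainder $O((\Delta t)^3)$ thanks to $\ln V\in W^{3,\infty}$), and subtract the second line of \eqref{CN-SM1}. Setting $\tilde e^{n+1/2}:=\ln V^{n+1/2}-\ln V(t^{n+1/2})$, this yields
\begin{equation*}
\tilde e^{n+1/2}-\tilde e^{n-1/2}=-\Delta t\!\left[\frac{\mathcal{K}(u^n)}{E(u^n)}-\frac{\mathcal{K}(u(t^n))}{E(u(t^n))}\right]+O((\Delta t)^3),
\end{equation*}
and splitting the quotient difference as $e_\mathcal{K}^n/E(u(t^n))-\mathcal{K}(u^n)\,e_E^n/(E(u^n)E(u(t^n)))$ exposes $|e_\mathcal{K}^n|$ and $|e_E^n|$. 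Multiplying by $\tilde e^{n+1/2}+\tilde e^{n-1/2}$ produces $|\tilde e^{n+1/2}|^2-|\tilde e^{n-1/2}|^2$ on the left; Young's inequality on the right then generates the telescoped $\Delta t\sum|\tilde e^{n-1/2}|^2$ form matching \eqref{main result}. A mean-value step $\tilde e^{n+1/2}=e_V^{n+1/2}/\zeta^{n+1/2}$ transfers the estimate from $|\tilde e^{n+1/2}|^2$ to $|e_V^{n+1/2}|^2$, provided $V$ and $V^n$ stay uniformly bounded below; positivity of $V^n$ is supplied by Theorem~\ref{theorem2}, and the uniform lower bound follows from the regularity hypothesis $V\in W^{3,\infty}$ combined with the initial data.

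The main obstacle is the nonlinear coupling: the $u$-estimate carries a $|e_V^{n+1/2}|^2$ term and the $V$-estimate carries $\|e_u^n\|^2$, $|e_E^n|^2$, and $|e_\mathcal{K}^n|^2$. After adding the two inequalities one must ensure that (i) the dissipative $\|\mathcal{A}^{1/2}\cdot\|^2$ on the left is strong enough to absorb the $\epsilon|e_\mathcal{K}^n|^2$ contribution once $|e_\mathcal{K}^n|$ is later controlled by the graph-norm error, and (ii) all constants hidden in $\chi'$, $g'$, $1/V$, and $1/E$ are uniform in $n$, which requires the \emph{a priori} boundedness furnished by the stated regularity assumptions. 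A secondary technicality is the start-up step: since $\bar u^{1/2}$ and $u^1$ are produced by a first-order predictor, I would verify that $\|e_u^1\|^2$ and $|e_V^{1/2}|^2$ are of size $O((\Delta t)^4)$ and are thus safely absorbed into the final $C(\Delta t)^4$ term.
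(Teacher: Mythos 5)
Your proposal is correct in its overall architecture and matches the paper for the $u$-part: both evaluate the continuous equation at $t^{n+1/2}$, subtract the scheme, test with $(e_u^{n+1}+e_u^n)/2$, use the Crank--Nicolson truncation bounds under the stated regularity and the $C^2$ bound on $g$ together with an \emph{a priori} $L^\infty$ bound on $u^n$, and then sum and add the $V$-estimate; neither invokes a discrete Gronwall inequality at this stage. Where you genuinely diverge is in the key lemma for $e_V$. The paper exponentiates both the exact midpoint relation and the discrete relation to obtain multiplicative recursions $V(t^{n+1/2})=e^{-\mathcal{K}/E\,\Delta t+R\Delta t}V(t^{n-1/2})$ and $V^{n+1/2}=e^{-\mathcal{K}(u^n)/E(u^n)\,\Delta t}V^{n-1/2}$, subtracts them, multiplies by $e_V^{n+1/2}$, and estimates the resulting differences of exponentials by the mean value theorem; this only requires an \emph{upper} bound on $V^{n-1/2}$, which is free from the monotonicity $V^{n-1/2}\leq V^{1/2}$ of Theorem~\ref{theorem2}. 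You instead stay in logarithmic variables, telescope $|\ln V^{n+1/2}-\ln V(t^{n+1/2})|^2$, and convert back to $|e_V^{n+1/2}|^2$ at the end. That route is arguably cleaner algebraically, but the conversion in the direction $|\tilde e^{\,n-1/2}|\leq C|e_V^{n-1/2}|$ needs a uniform \emph{positive lower} bound on the discrete $V^{n-1/2}$, and your stated justification (regularity of the exact $V$ plus the initial data) does not supply it: Theorem~\ref{theorem2} gives only positivity, not a uniform lower bound. The gap is fixable --- under the same \emph{a priori} assumption $\|u^n\|_\infty\leq C^*$ that you already need for the nonlinear terms, $\mathcal{K}(u^n)/E(u^n)$ is uniformly bounded, so $V^{n+1/2}\geq e^{-CT}V^{1/2}>0$ --- but you should make that argument explicit rather than attribute the lower bound to the regularity of the exact solution. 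A second, minor discrepancy: with the paper's concrete choice $\eta^{n+1/2}=V^{n+1/2}/E(\overline{u}^{n+1/2})$, the factor $\eta^{n+1/2}-1$ contributes $|e_E^n|^2$ and $|e_E^{n-1}|^2$ terms directly from the $u$-equation (which is why they appear in \eqref{main result}), whereas your abstract $\chi'(\zeta)\,e_V^{n+1/2}$ treats $\chi$ as a function of $V$ alone and misses the dependence on $E(\overline{u}^{n+1/2})$; this does not affect the validity of the final bound since those terms are already admitted on the right-hand side.
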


\begin{proof}
	Though the error analysis for the unknown $u$ is rather standard and here for simplicity, we only demonstrate the main steps to obtain the final results. However,
the error estimates for the auxiliary numerical solution $V$ are nontrivial, so we give rigorous proof in the following lemma \ref{lemma1}.

	We first give the following \textit{a priori} assumption regarding the numerical solution 
	\begin{equation}\label{e_induction}
		\aligned
		\| u^n \|_{\infty} \leq C^*,
		\endaligned
	\end{equation} 
	where $C^*$ is a positive constant. A detailed proof for the \textit{a priori} assumption can be found by using a similar procedure as hypothesis (3.17a) in \cite{liu2023high} when combined with finite difference method, so here we skip that for simplicity. 
	
	Then we give error equation according to the first equation in \eqref{CN-SM3}:
	\begin{equation}\label{e_error u1}
		\aligned
		& \displaystyle\frac{e_u^{n+1} - e_u^{n}}{\Delta t}+\mathcal{A}\frac{e_u^{n+1}+ e_u^{n}}{2} +  g\left(\eta^{n+\frac12}\overline{u}^{n+\frac12}\right) -  g\left( u(t^{n+1/2} )\right) \\
		= & \displaystyle \frac{ \partial u(t^{n+1/2})}{ \partial t} - \frac{u(t^{n+1}) - u(t^{n})  }{\Delta t} + \mathcal{A} u(t^{n+1/2}) -  \mathcal{A} \frac{ u(t^{n+1}) + u(t^{n})} {2} \\
		= & - ( \frac{\partial^3 u(\zeta_1)}{\partial t^3} +\frac{\partial^3 u(\zeta_2)}{\partial t^3} )  \frac{ (\Delta t)^2 } {48} -
		 \mathcal{A}  ( \frac{ \partial ^2 u( \tilde{\zeta}_1) }{ \partial  t^2 } + \frac{ \partial ^2 u( \tilde{\zeta}_2) }{ \partial  t^2 } ) \frac{ (\Delta t)^2 } {8},
		\endaligned
	\end{equation} 
	where $ \zeta_i, \tilde{\zeta}_i  \in(t^{n-\frac12},t^{n+\frac12}), \ i=1,2$ and as stated in Remark \ref{rem_choose chi}, we choose $\eta^{n+\frac12} = \chi_1(V^{n+\frac12})=\frac{V^{n+\frac12}}{E(\overline{u}^{n+\frac12})}$ for simplicity. 
	Taking the inner product with $\frac{e_u^{n+1}+ e_u^{n}}{2}$, and noting the fact that $g(u) \in C^2(\mathbb{R})$ and \eqref{e_induction}, we have
	\begin{equation}\label{e_error u2}
		\aligned
		\frac{  \| e_u^{n+1}  \|^2 -  \| e_u^{n}  \|^2 }{ 2 \Delta t } + &  \| \mathcal{A} ^{1/2} \frac{e_u^{n+1}+ e_u^{n}}{2} \|^2 \leq C |e_V^{n+\frac12}|^2 + C |e_E^{n}|^2 \\
		& + C |e_E^{n-1}|^2  + C \| e_u^{n+1}  \|^2+ C \| e_u^{n}  \|^2 + C (\Delta t)^4,
		\endaligned
	\end{equation} 
	where we choose $C_0>0$ such that $E(u)=E_{tot}(u)+C_0>\frac{1}{2}$.

	Multiplying $\Delta t$ on both sides of \eqref{e_error u2} and summing up for $n=1,\ldots,k \ (k\leq N)$ lead to
	\begin{equation}\label{e_error u3}
		\aligned
		\| e_u^{k+1}  \|^2 + \Delta t \sum\limits_{n=1}^{k}  \| \mathcal{A} ^{1/2} \frac{e_u^{n+1}+ e_u^{n}}{2} \|^2 \leq & C  \Delta t\sum\limits_{n=1}^{k}\left|e_V^{n + \frac12}\right |^2 + C  \Delta t\sum\limits_{n=1}^{k}  |e_{E}^n | ^2  \\
		&+ C \Delta t\sum\limits_{n=1}^{k} \| e_u^{n+1}  \|^2 + C(\Delta t)^4. 
		\endaligned
	\end{equation} 
	
	To bound the terms on the right-hand side of \eqref{e_error u3}, we first give the following error estimates for the term $e_V$.
	
	\begin{lemma}\label{lemma1}
	Suppose that $  V(t) \in W^{3,\infty}(0,T) $  and $  \ln V(t)\in W^{3,\infty}(0,T) $, then there exists a positive constant $C$ independent of $\Delta t$ such that
	\begin{equation}\label{error-analysis6}
		\aligned
		\displaystyle|e_V^{k+\frac12}|^2\leq C\Delta t\sum\limits_{n=1}^{k}\left|e_V^{n-\frac12}\right |^2+ \epsilon \Delta t\sum\limits_{n=1}^{k}\left( |e_{\mathcal{K}}^n | ^2+ |e_{E}^n | ^2\right)+C(\Delta t)^4.
		\endaligned
	\end{equation} 
\end{lemma}

\begin{proof} We first establish the error equation corresponding to the auxiliary variable $V$. For the second equation in the equivalent system \eqref{equivalent-dissipative-systems}, by using the Taylor expansion with Lagrange residuals, we easily derive 
	\begin{equation}\label{error-analysis1}
	\aligned
		& \displaystyle\frac{\ln V(t^{n+\frac12})-\ln V(t^{n-\frac12})}{\Delta t} 
		=  -\frac{\mathcal{K}(u(t^n))}{E(u(t^n))} + R^{n+1/2}(V),
	\endaligned
	\end{equation}
	where $ R^{n+1/2}(V) = ( \frac{\partial^3\ln V( \hat{\zeta}_1)}{\partial t^3} +  \frac{\partial^3\ln V( \hat{\zeta}_2)}{\partial t^3} )
		 \frac{ (\Delta t)^2 } {48}, \quad \hat{\zeta}_i \in(t^{n-\frac12},t^{n+\frac12}), \ i=1,2.$
	From the properties of the logarithmic function, we easily have
	\begin{equation}\label{error-analysis2}
		\displaystyle\ln\frac{V(t^{n+\frac12})}{V(t^{n-\frac12})}= - \frac{\mathcal{K}(u(t^n))}{E(u(t^n))}\Delta t + R^{n+1/2}(V) \Delta t. 
	\end{equation}
	Applying the exponential function to both sides of the above equation simultaneously, we get the following equation
	\begin{equation}\label{error-analysis3}
		\displaystyle V(t^{n+\frac12})=e^{-\frac{\mathcal{K}(u(t^n))}{E(u(t^n))}\Delta t + R^{n+1/2}(V) \Delta t  }V(t^{n-\frac12}).
	\end{equation}
	Subtracting $V(t^{n-\frac12})$ from \eqref{error-analysis3} gives
	\begin{equation}\label{error-analysis4}
		\displaystyle V(t^{n+\frac12})-V(t^{n-\frac12})=\left(e^{-\frac{\mathcal{K}(u(t^n))}{E(u(t^n))}\Delta t + R^{n+1/2}(V) \Delta t }-1\right)V(t^{n-\frac12}).
	\end{equation}
	Subtracting \eqref{error-analysis4} from \eqref{CN-SM2} yields 
	\begin{equation}\label{error-analysis5}
		\aligned
		\displaystyle e_V^{n+\frac12}-e_V^{n-\frac12}
		&\displaystyle=\left(e^{-\frac{\mathcal{K}(u^n)}{E(u^n)}\Delta t}-1\right)V^{n-\frac12}
		-\left(e^{-\frac{\mathcal{K}(u(t^n))}{E(u(t^n))}\Delta t+ R^{n+1/2}(V) \Delta t }-1\right)V(t^{n-\frac12})\\
		&\displaystyle=\left(e^{-\frac{\mathcal{K}(u^n)}{E(u^n)}\Delta t}-e^{-\frac{\mathcal{K}(u(t^n))}{E(u(t^n))}\Delta t + R^{n+1/2}(V) \Delta t }\right)V^{n-\frac12}\\
		&\displaystyle\quad+\left(e^{-\frac{\mathcal{K}(u(t^n))}{E(u(t^n))}\Delta t + R^{n+1/2}(V) \Delta t }-1\right)\left(V^{n-\frac12}-V(t^{n-\frac12})\right).
		\endaligned
	\end{equation}
	Multiplying $e_V^{n+\frac12}$ on both sides of  \eqref{error-analysis5}, one derives immediately the following
	\begin{equation}\label{error-analysis7}
		\aligned
		&\displaystyle\frac{|e_V^{n+\frac12}|^2-|e_V^{n-\frac12}|^2}{2}+\frac{|e_V^{n+\frac12}-e_V^{n-\frac12}|^2}{2}
		\displaystyle\\
		=&\left(\left(e^{-\frac{\mathcal{K}(u^n)}{E(u^n)}\Delta t}-e^{-\frac{\mathcal{K}(u(t^n))}{E(u(t^n))}\Delta t + R^{n+1/2}(V) \Delta t }\right)V^{n-\frac12},e_V^{n+\frac12}\right)\\
		&\displaystyle+\left(\left(e^{-\frac{\mathcal{K}(u(t^n))}{E(u(t^n))}\Delta t + R^{n+1/2}(V) \Delta t }-1\right)e_V^{n-\frac12},e_V^{n+\frac12}\right).
		\endaligned
	\end{equation}
	
	For the first term on the right side of the above equation \eqref{error-analysis7}, noting that $-\frac{\mathcal{K}(u(t^n))}{E(u(t^n))}\Delta t\leq0$, then 
	 there exists a positive constant $C^*$ such that 
	\begin{equation}\label{error-analysis8}
		\aligned
		\displaystyle -\frac{\mathcal{K}(u(t^n))}{E(u(t^n))}\Delta t + R^{n+1/2}(V) \Delta t
		\leq C^*.
		\endaligned
	\end{equation}
	
	Using the Cauchy-Schwarz inequality, integral mean value theorem and \eqref{e_induction}, we can obtain that there exists a constant $\eta\leq C^*$ such that
	\begin{equation}\label{error-analysis9}
		\aligned
		&\displaystyle\left(\left(e^{-\frac{\mathcal{K}(u^n)}{E(u^n)}\Delta t}-e^{-\frac{\mathcal{K}(u(t^n))}{E(u(t^n))}\Delta t + R^{n+1/2}(V) \Delta t }\right)V^{n-\frac12},e_V^{n+\frac12}\right)\\
		&\displaystyle=\left(e^{\eta}\left(-\frac{\mathcal{K}(u^n)}{E(u^n)}\Delta t+\frac{\mathcal{K}(u(t^n))}{E(u(t^n))}\Delta t - R^{n+1/2}(V) \Delta t \right)V^{n-\frac12},e_V^{n+\frac12}\right)\\
		&\displaystyle \leq C\Delta t\left[\left|\frac{\mathcal{K}(u(t^n))}{E(u(t^n))}-\frac{\mathcal{K}(u^n)}{E(u^n)}\right|+\left|\frac{\partial^3\ln V}{\partial t^3}\right|\Delta t^2\right]\left|e_V^{n+\frac12}\right|\\
		&\displaystyle\leq C_1\Delta t\left|e_V^{n+\frac12}\right|^2+C\Delta t\left|\frac{\mathcal{K}(u(t^n))E(u^n)-\mathcal{K}(u^n)E(u(t^n))}{E(u(t^n))E(u^n)}\right|^2+C(\Delta t)^5\\
		&\displaystyle\leq C_1\Delta t\left|e_V^{n+\frac12}\right|^2+ \epsilon \Delta t\left(|\mathcal{K}(u(t^n))-\mathcal{K}(u^n)|^2+|E(u(t^n))-E(u^n)|^2\right)+C(\Delta t)^5,
		\endaligned
	\end{equation}
	where $ \epsilon $ is a positive constant which can be  chosen to be sufficiently small.

	For the second term on the right-hand side of the above equation \eqref{error-analysis7}, we find
	\begin{equation}\label{error-analysis10}
		\aligned
		&\displaystyle\left(\left(e^{-\frac{\mathcal{K}(u(t^n))}{E(u(t^n))}\Delta t + R^{n+1/2}(V) \Delta t }-1\right)e_V^{n-\frac12},e_V^{n+\frac12}\right)\\
		&=\displaystyle\left(\left(e^{-\frac{\mathcal{K}(u(t^n))}{E(u(t^n))}\Delta t + R^{n+1/2}(V) \Delta t }-e^0\right)e_V^{n-\frac12},e_V^{n+\frac12}\right)\\
		&=\displaystyle e^{\eta_2}\left(-\frac{\mathcal{K}(u(t^n))}{E(u(t^n))}\Delta t + R^{n+1/2}(V) \Delta t \right)\left(e_V^{n-\frac12},e_V^{n+\frac12}\right)\\
		&\leq\displaystyle \left(C\frac{\mathcal{K}(u(t^n))}{E(u(t^n))}\Delta t+C\Delta t^3\right)|\left(e_V^{n-\frac12},e_V^{n+\frac12}\right)|\\
		&\leq C_2\frac{\mathcal{K}(u(t^n))}{E(u(t^n))}\Delta t(\left|e_V^{n+\frac12}\right|^2+\left|e_V^{n-\frac12}\right|^2)+C\Delta t (\left|e_V^{n+\frac12}\right|^2+\left|e_V^{n-\frac12}\right|^2).
		\endaligned
	\end{equation}
	
	Assuming that the true solution $|u(t^n)|\leq C$, we combine the above inequalities \eqref{error-analysis9}-\eqref{error-analysis10} into \eqref{error-analysis7}, we arrive at
	\begin{equation}\label{error-analysis11}
		\aligned
		&\displaystyle\frac{|e_V^{n+\frac12}|^2-|e_V^{n-\frac12}|^2}{2}+\frac{|e_V^{n+\frac12}-e_V^{n-\frac12}|^2}{2}\\
		\leq& \left(C_1+C_2\frac{\mathcal{K}(u(t^n))}{E(u(t^n))}+C\Delta t^2\right)\Delta t\left|e_V^{n+\frac12}\right|^2+C\Delta t\left|e_V^{n-\frac12}\right|^2\\
		&+\epsilon \Delta t\left(|e_{\mathcal{K}}^n|^2+|e_{E}^n|^2\right)+C(\Delta t)^5\\
		\leq &C_3\Delta t\left|e_V^{n+\frac12}\right|^2+C\Delta t\left|e_V^{n-\frac12}\right|^2+\epsilon \Delta t\left(|e_{\mathcal{K}}^n|^2+|e_{E}^n|^2\right)+C(\Delta t)^5.
		\endaligned
	\end{equation}
	
	Supposing $C_3\Delta t\leq\frac14$ and  summing up above inequality for $n=1,\ldots,k \ (k\leq N)$, we obtain
	\begin{equation}\label{error-analysis12}
		\aligned
		\displaystyle|e_V^{k+\frac12}|^2+& \sum\limits_{n=1}^{k}|e_V^{n+\frac12}-e_V^{n-\frac12}|^2\\
		&\leq\left|e_V^{\frac12}\right|^2+C\Delta t\sum\limits_{n=1}^{k}\left|e_V^{n-\frac12}\right|^2+\epsilon \Delta t\sum\limits_{n=1}^{k}\left(|e_{\mathcal{K}}^n|^2+|e_{E}^n|^2\right)+C(\Delta t)^4.
		\endaligned
	\end{equation}
	
	Denoting that $V^{\frac12}=E(\overline{u}^{\frac12})$ and $u(t^{\frac12})-\overline{u}^{\frac12}$ has a local truncation error of $O(\Delta t^2)$, we immediately have $\left|e_V^{\frac12}\right|^2=O(\Delta t^4)$. Combining it with above inequality and dropping some unnecessary terms, we arrive at 
	\begin{equation*}
		\aligned
		\displaystyle|e_V^{k+\frac12}|^2\leq C\Delta t\sum\limits_{n=1}^{k}\left|e_V^{n-\frac12}\right|^2+\epsilon \Delta t\sum\limits_{n=1}^{k}\left(|e_{\mathcal{K}}^n|^2+|e_{E}^n|^2\right)+C(\Delta t)^4,
		\endaligned
	\end{equation*}
	which completes the proof.
\end{proof}	
	
	Then combining  \eqref{e_error u3} with \eqref{error-analysis6} in Lemma \ref{lemma1}, we have
	\begin{equation}\label{e_error u4}
		\aligned
		\| e_u^{k+1} & \|^2 +  \Delta t \sum\limits_{n=1}^{k}  \| \mathcal{A} ^{1/2} \frac{e_u^{n+1}+  e_u^{n}}{2} \|^2 + |e_V^{k+\frac12}|^2 \\
		\leq & C  \Delta t\sum\limits_{n=1}^{k}\left|e_V^{n - \frac12}\right |^2  + C  \Delta t\sum\limits_{n=1}^{k}  |e_{E}^n | ^2 + C \Delta t \sum\limits_{n=1}^{k} \| e_u^{n}  \|^2
		+ \epsilon \Delta t\sum\limits_{n=1}^{k} |e_{\mathcal{K}}^n | ^2  + C(\Delta t)^4,
		\endaligned
	\end{equation} 
	which leads to the desired result.
	
\end{proof}
\begin{remark}
	Please note that here we only consider the error estimates for general dissipative systems. We need to estimate the error of $|e_{\mathcal{K}}^n |$ and $ |e_{E}^n | ^2$ when we consider the particular system, including Allen-Cahn equation, Cahn-Hilliard equation and Navier-Stokes equations, et al. Specially for the Navier-Stokes equations, one should take the inner product with $\frac{e_u^{n+1}- e_u^{n}}{ \Delta t}$ in \eqref{e_error u1} to control the term $\| \nabla   e_u^{n} \|^2$ on the right-hand side. In addition,  a modification to the Crank-Nicolson method can also be used in \eqref{CN-SM3} to obtain the desired error estimate, i.e.
	\begin{equation}\label{CN-SM3_modification}
		\begin{array}{l}
			\displaystyle\frac{u^{n+1}-u^{n}}{\Delta t}+\mathcal{A}(\frac{3}{4}u^{n+1}+\frac{1}{4}u^{n-1})+g\left(\eta^{n+\frac12}\overline{u}^{n+\frac12}\right)=0,\\
			\displaystyle\frac{\ln V^{n+
					\frac12}-\ln V^{n-\frac12}}{\Delta t}=-\frac{\mathcal{K}(u^n)}{E(u^n)},\\ 
			\displaystyle\eta^{n+\frac12}=\mathcal{F}(V^{n+\frac12}).
		\end{array}
	\end{equation}
	
	Hence by using similar error estimates of Lemma 2.12 in \cite{diegel2017convergence}, the nonlinear term of $|e_{\mathcal{K}}^n |$ can be controlled by $\| \nabla ( \frac{3}{4}u^{n+1} +   \frac{1}{4}u^{n-1}) \| $. 
\end{remark}

\section{Numerical Simulations}
In this section, we present several numerical experiments to verify the theoretical results of the proposed schemes. Unless otherwise specified, periodic boundary conditions are implemented for all examples and  $\chi(V)=\chi_1(V)$, with the computations being conducted using a Fourier spectral method.
\subsection{Convergence tests}
In this example, we first verify the accuracy of the proposed numerical schemes for the Allen-Cahn equation, specifically,
$$\mathcal{A}= -\Delta,\quad g(u)=\frac{1}{\epsilon^2}(u^3-u),$$
as well as for the Cahn-Hilliard equation, given by
$$\mathcal{A}=\Delta^2,\quad g(u)=-\frac{1}{\epsilon^2}\Delta(u^3-u).$$ 
We set the parameter $\epsilon=0.7$ 
and assume that the exact solution is described by
\begin{equation*}
	u(x,y,t)=\cos(x)\cos(y)\sin{t}.
\end{equation*}
To discretize the spatial variables, we define the domain as $\Omega= [0, 2\pi)^2$ and use a $256\times256$ grid, ensuring that the spatial discretization error is negligible compared to the temporal discretization error. Figs. \ref{newSAVerror1}-\ref{newSAVerror2} present the $L^2$ errors between the numerical and exact solutions at $T=1$.  It is evident that all schemes achieve the expected accuracy in time. In comparison to the Lagrange Multiplier (LM) scheme presented in  \cite{cheng2020new} and the 
general scalar auxiliary variable (GSAV) scheme  discussed in \cite{huang2022new}, our CN-SM scheme \eqref{CN-SM1} demonstrates significantly smaller errors as shown in Figs. \ref{newSAVerror1}(a)-\ref{newSAVerror2}(a).
Regarding energy approximation, the constructed scheme  achieve second-order accuracy, while the GSAV method only attains first-order accuracy, as illustrated in Figs. \ref{newSAVerror1}(b)-\ref{newSAVerror2}(b).

\begin{figure}[!t]
	\centering 
	\subfloat[Time step size vs. errors of $u$]{
		\begin{minipage}[c]{0.45\textwidth}
			\includegraphics[width=1\textwidth]{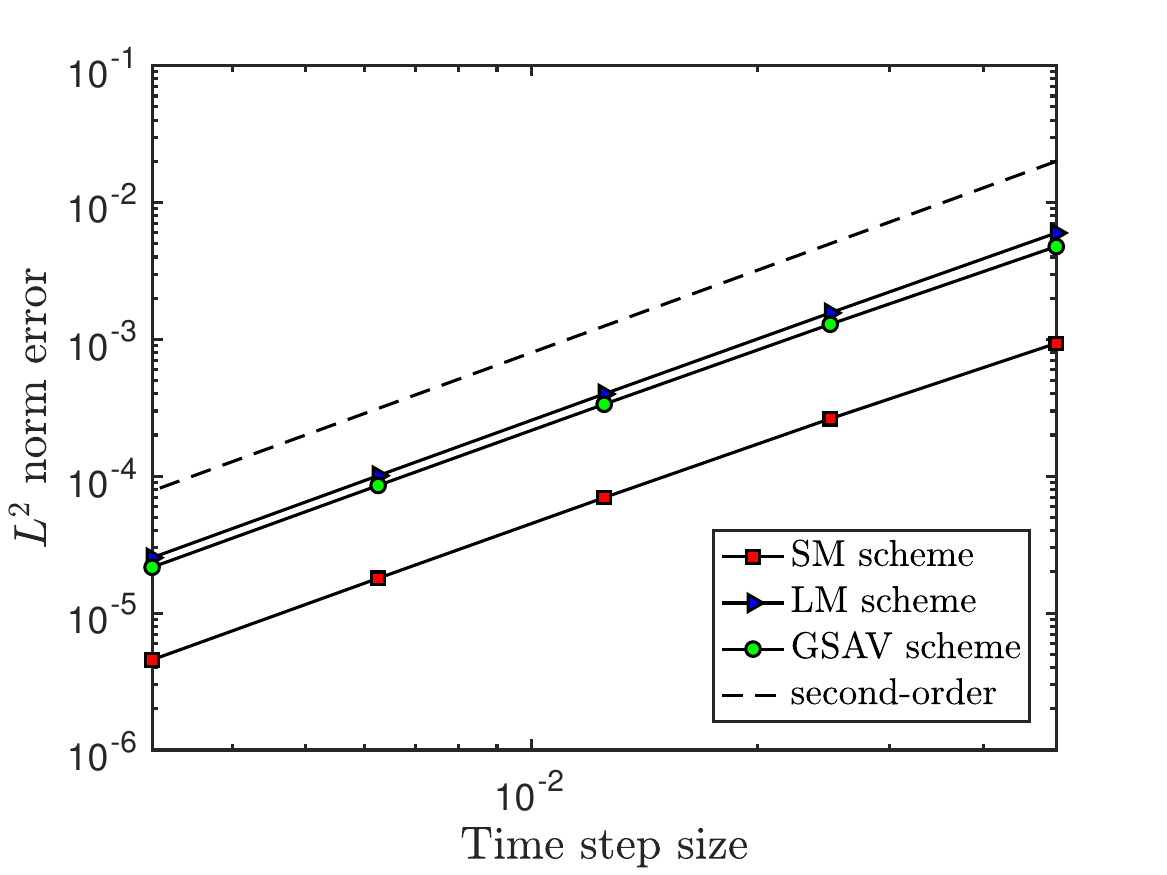}\label{newSAVerror1a}
		\end{minipage}
	}
	\subfloat[Time step size  vs. errors of $E$]{
		\begin{minipage}[c]{0.45\textwidth}
			\includegraphics[width=1\textwidth]{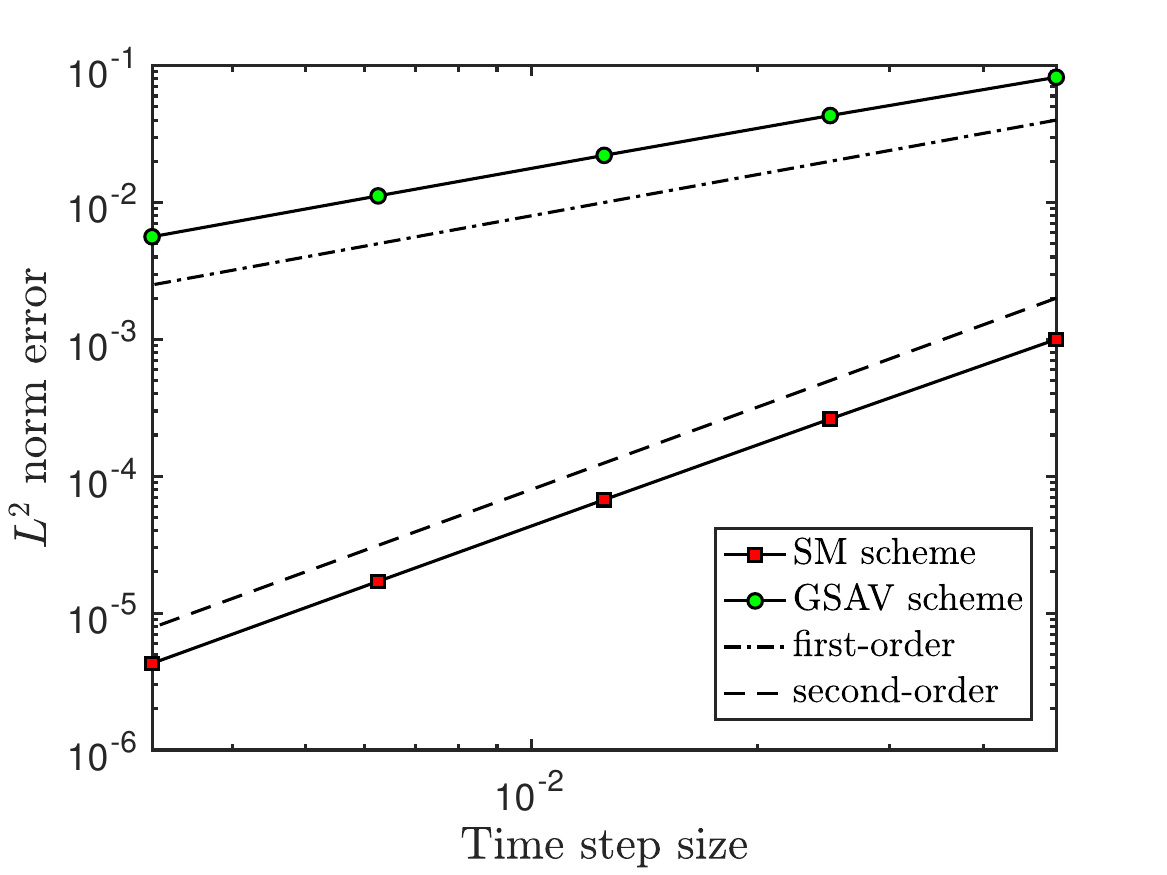}\label{newSAVerror1b}
		\end{minipage}
	}
	\caption{Numerical convergence  rates for the Allen-Cahn equation.
	}\label{newSAVerror1}
\end{figure}

\begin{figure}[!t]
	\centering 
	\subfloat[Time step size vs. errors of $u$]{
		\begin{minipage}[c]{0.45\textwidth}
			\includegraphics[width=1\textwidth]{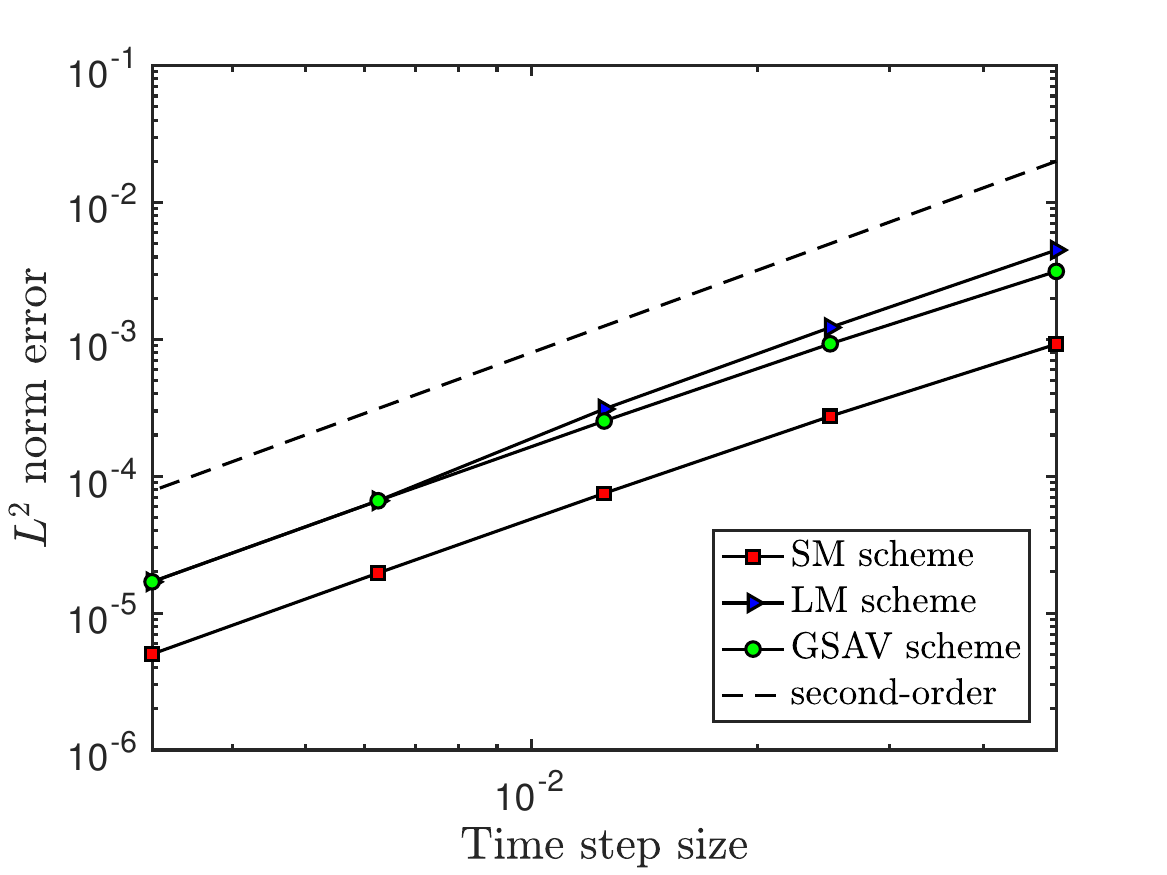}\label{newSAVerror2a}
		\end{minipage}
	}
	\subfloat[Time step size  vs. errors of $E$]{
		\begin{minipage}[c]{0.45\textwidth}
			\includegraphics[width=1\textwidth]{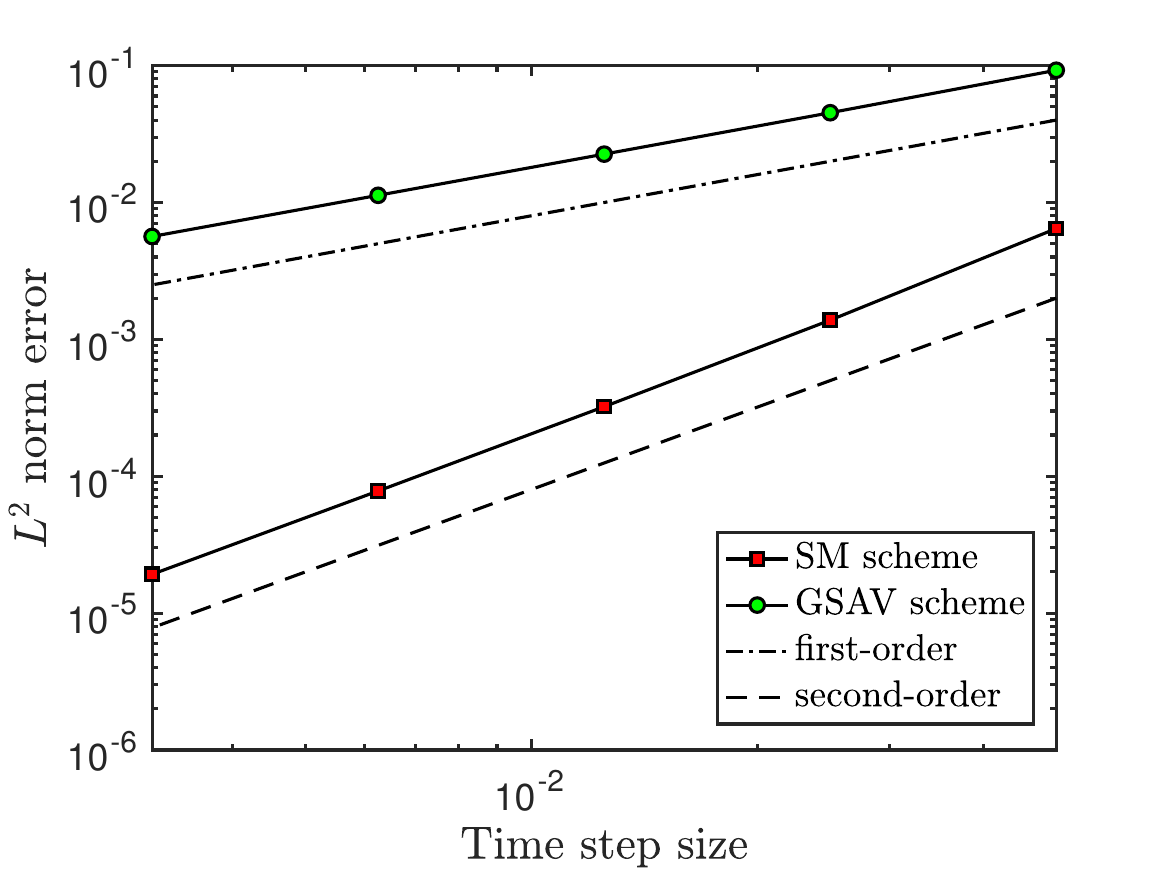}\label{newSAVerror2b}
		\end{minipage}
	}
	\caption{Numerical convergence rates for the Cahn-Hillard equation.
	}\label{newSAVerror2}
\end{figure}
\subsection{Navier-Stokes equations}\label{NSeq}
In this section, we present a numerical investigation of the Navier-Stokes equations using the CN-SM scheme. The Navier-Stokes equations are fundamental to modeling fluid dynamics and can be expressed as follows:
\begin{equation}\label{NS_equation}
	\begin{array}{l}
		\textbf{u}_t-\nu\Delta \textbf{u}+(\textbf{u}\cdot\nabla)\textbf{u}+\nabla p=0,\\
		\nabla\cdot \textbf{u}=0.
	\end{array}
\end{equation}
This system \eqref{NS_equation} satisfies a specific principle of energy dissipation, which can be written as follows:
\begin{equation*}
	\frac{dE(\textbf{u})}{dt}=-\nu\|\nabla \textbf{u}\|^2,
\end{equation*}
where $E(\textbf{u})$ is defined as the kinetic energy given by $E(\textbf{u})=\frac{1}{2}\|\textbf{u}\|^2$.
We also introduce an auxiliary variable defined as $V(t)=E(\textbf{u})+C_0$, where $C_0 \geq 0$ ensures $V(t)>0$. This approach allows us to construct an unconditionally stable numerical scheme, which is implemented effectively as follows:
\begin{equation}\label{CN-SM-NS}
	\begin{array}{l}
		\displaystyle\frac{\textbf{u}^{n+1}-\textbf{u}^{n}}{\Delta t}-\nu\Delta\frac{\textbf{u}^{n+1}+\textbf{u}^{n}}{2}+(	\eta^{n+\frac12}\overline{\textbf{u}}^{n+\frac12}\cdot\nabla)	\eta^{n+\frac12}\overline{\textbf{u}}^{n+\frac12}+	\nabla p^{n+\frac12}=0,\\
		\nabla\cdot \textbf{u}^{n+\frac12}=0,\\
		\displaystyle\frac{\ln V^{n+
				\frac12}-\ln V^{n-\frac12}}{\Delta t}=-\frac{\mathcal{K}(u^n)}{E(u^n)},\\
		\displaystyle\eta^{n+\frac12}=\chi(V^{n+\frac12}).
	\end{array}
\end{equation}
This formulation ensures numerical stability and allows for efficient computational implementation.
\subsubsection{Convergence tests}
We first perform a test where the right-hand side is derived from the following analytical solution:
\begin{equation*}
	\begin{aligned}
		\textbf{u}_e(x,y,t)&= \pi\sin(t)(\sin(2\pi x)\cos(2\pi y),-\cos(2\pi x)\sin(2\pi y))^{\mathrm T},\\
		p_e(x,y,t)&=\sin(t)\cos(2\pi x)\sin(2\pi y).
	\end{aligned}
\end{equation*}
In this simulation, we consider the computational domain defined as $\Omega= [0, 1)^2$ and set the kinematic viscosity $\nu= 1$. We implement a spatial discretization using $ 256\times 256$ Fourier modes. The $L^2$-norm errors for the CN-SM scheme are illustrated in Fig. \ref{NS_periodic2}(a), confirming the expected convergence rates of the numerical method.
\subsubsection{Double shear layer problem}
Next, we investigate the double shear layer problem. The initial conditions for the velocity field $\textbf{u}=(u_1,u_2)$ are defined as follows:
\begin{equation}
	\begin{aligned} 
		& u_1(x, y, 0)=\left\{\begin{array}{l}\tanh (\rho(y-0.25)), y \leq 0.5, \\ \tanh (\rho(0.75-y)), y>0.5,\end{array}\right. \\
		& u_2(x, y, 0)=\sigma \sin (2 \pi x).
	\end{aligned}
\end{equation}
We focus on the double shear layer problem within the framework of the Navier-Stokes equations.
Here, the parameter  $\rho$ signifies the width of the shear layer, while  $\sigma$ indicates the magnitude of the perturbation. For our simulations, we set $\sigma= 0.05$  and consider a computational domain 
$\Omega= [0, 1)^2$.
We present the evolution of the vorticity contours for the parameters  $\rho = 100$,  $\nu =5 \times 10^{-5}$, $N =256$, and a time step  of $\Delta t =3 \times 10^{-4}$, computed using the CN-SM scheme, as depicted in Fig. \ref{NS_periodic}.
Additionally, in Fig. \ref{NS_periodic2}(b)-(c), we provide the temporal evolution of the total energy and the parameter $\eta$ for the CN-SM scheme. The results indicate that the total energy exhibits a decay trend, while 
$\eta$ oscillates around the value of $1$, remaining positive throughout the simulation.
The results demonstrate that the vortex  intensity progressively increases over time.  
Notably, our second-order scheme yields accurate solutions, in contrast to the second-order scheme presented in \cite{huang2021stability}, which produced erroneous results.

\begin{figure}[!t]
	\centering 
	\subfloat[]{
		\begin{minipage}[c]{0.31\textwidth}
			\includegraphics[width=1\textwidth]{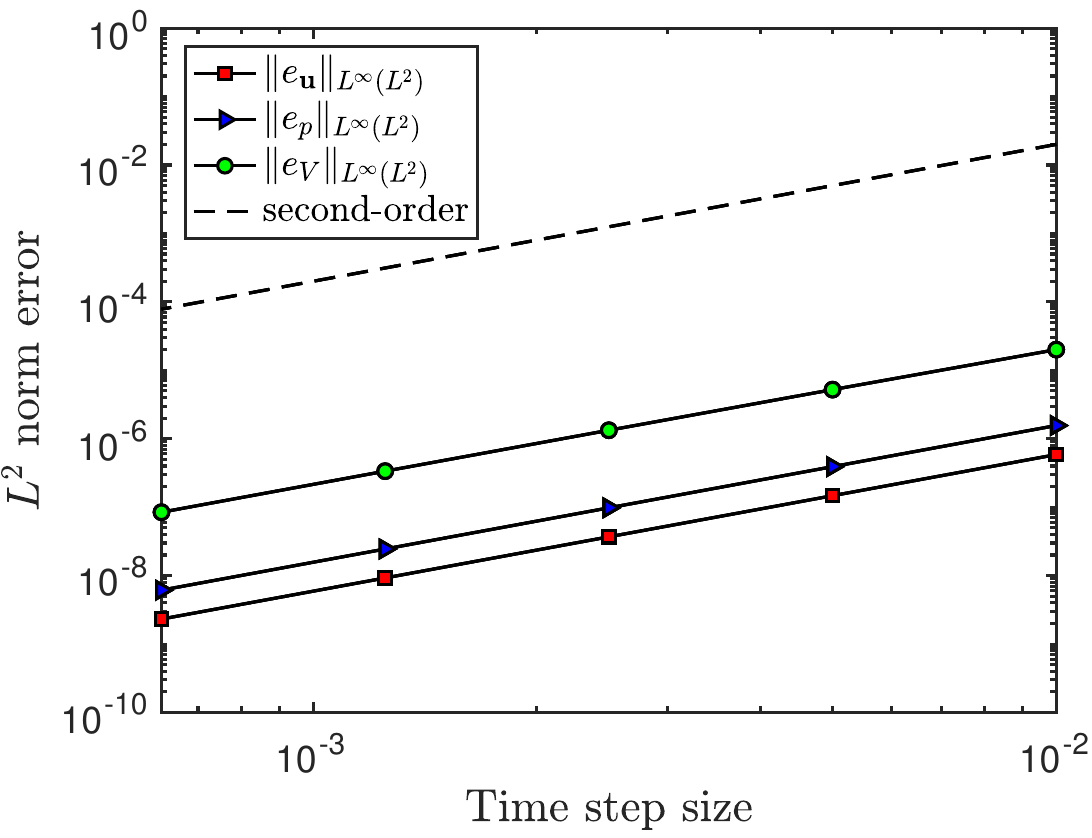}
		\end{minipage}
	}
	\subfloat[]{
		\begin{minipage}[c]{0.31\textwidth}
			\includegraphics[width=1\textwidth]{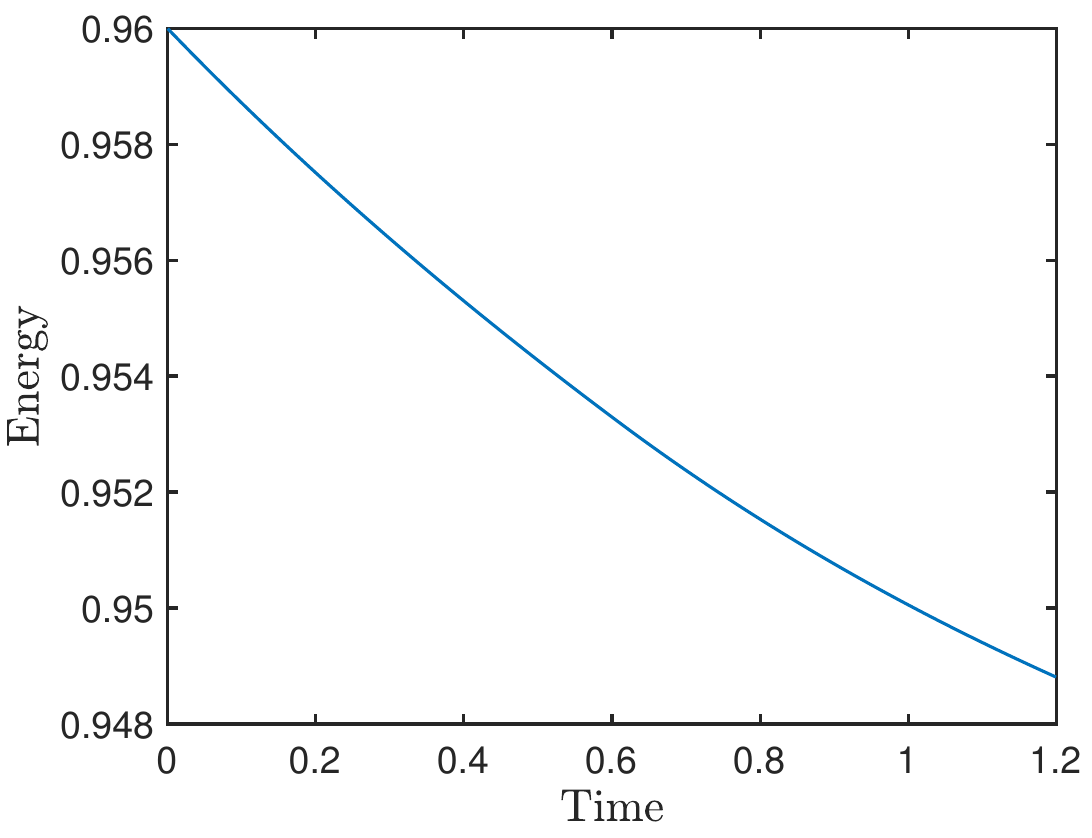}
		\end{minipage}
	}
	\subfloat[]{
		\begin{minipage}[c]{0.31\textwidth}
			\includegraphics[width=1\textwidth]{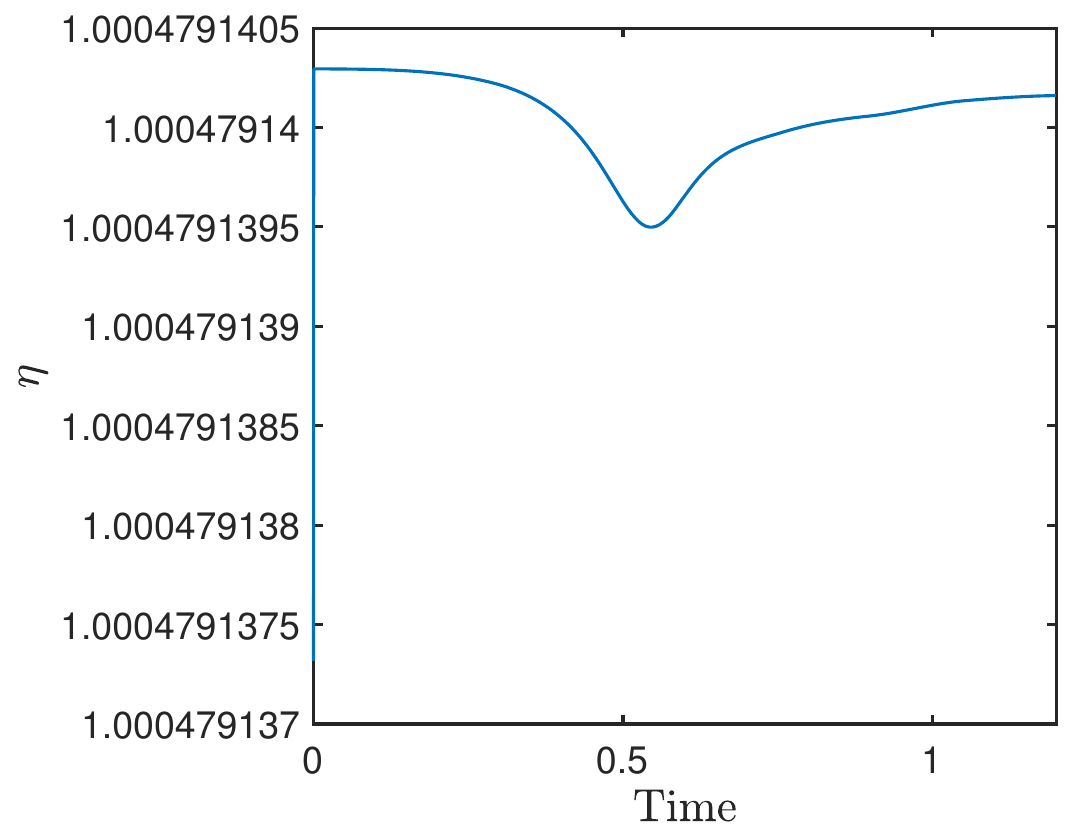}
		\end{minipage}
	}
	\caption{(a): Numerical convergence rates of the Navier-Stokes equations; (b)-(c): Evolution of energy and parameter $\eta$ with $\Delta t=3 \times 10^{-4}$, \ $\rho=100$,\ $\nu=5 \times 10^{-5}$ and $\sigma=0.05$.
	}\label{NS_periodic2}
\end{figure}

\begin{figure}[!t]
	\centering 
			\includegraphics[width=0.32\textwidth]{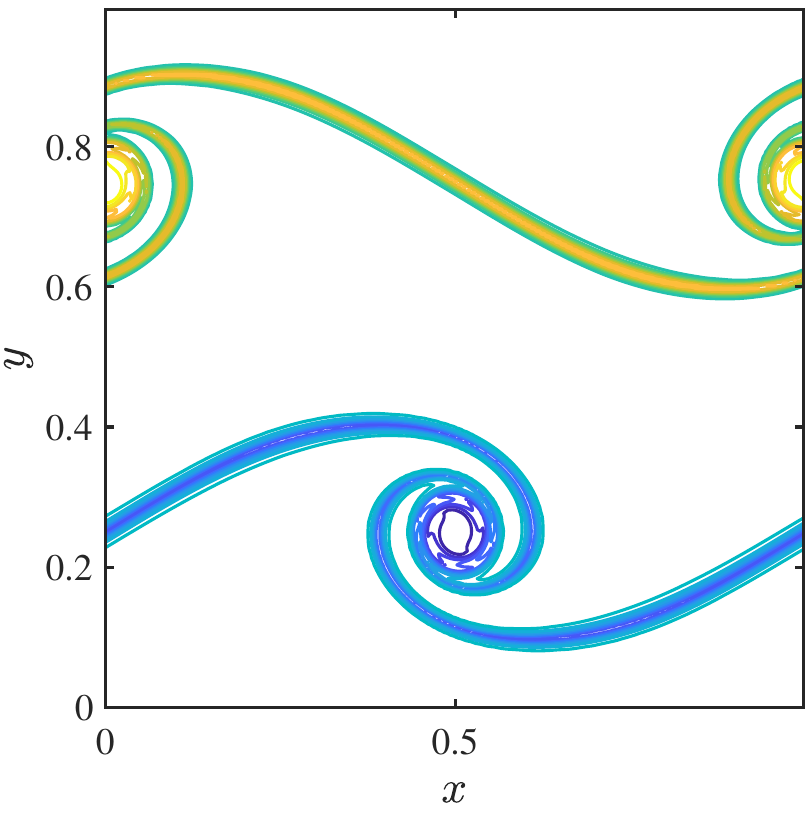}
			\includegraphics[width=0.32\textwidth]{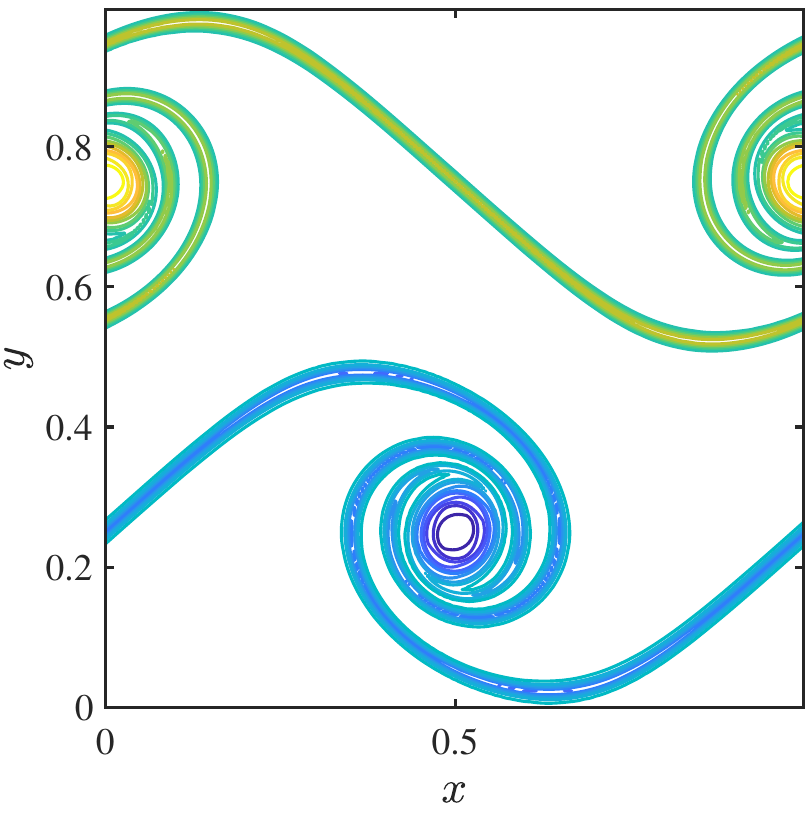}
			\includegraphics[width=0.32\textwidth]{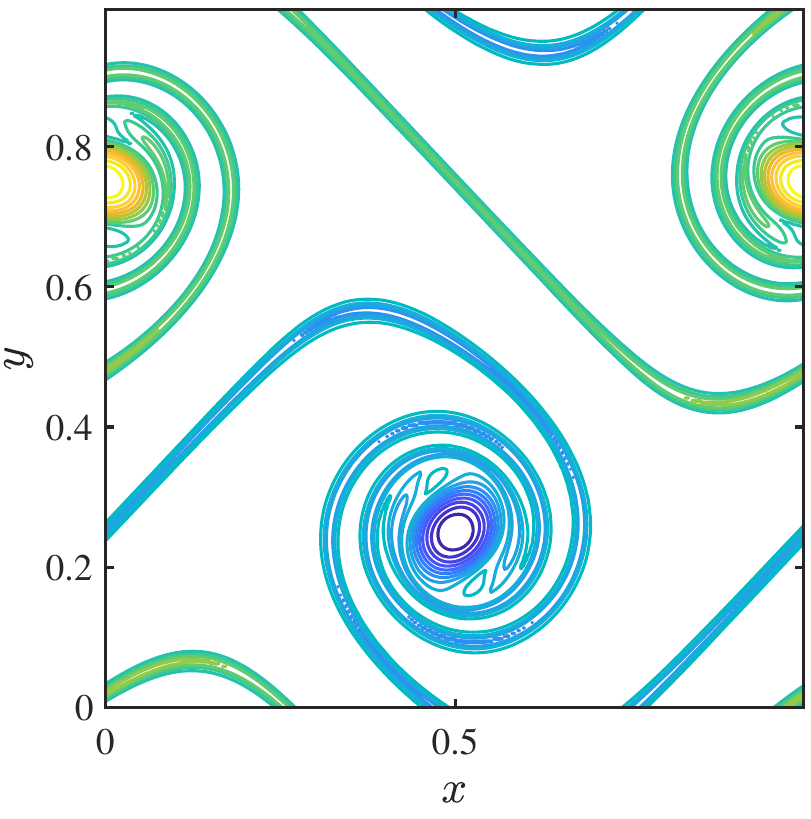}
	\caption{The evolution of vorticity of Navier-Stokes equations with $\Delta t=3 \times 10^{-4}$, \ $\rho=100$,\ $\nu=5 \times 10^{-5}$,\ $\sigma=0.05$ at $T = 0.8, \ 1,\ 1.2$.
	}\label{NS_periodic}
\end{figure}

\subsubsection{Kelvin-Helmholtz instability problem}
The initial velocity difference across a shear layer can lead to the growth of small disturbances, resulting in the formation of vortices, a phenomenon known as Kelvin-Helmholtz instability.  
We investigate a unit square computational domain with periodic boundary conditions at $x=0$ and $x=1$. At $y=0$ and $y=1$, we apply a no-penetration boundary condition, $\textbf{u} \cdot \textbf{n} =0$, and a weakly enforced free-slip condition, $(-\nu \nabla \textbf{u} \cdot \textbf{n} ) \times \textbf{n} =0$. The initial velocity field is given by:
$$
\textbf{u}(x, y, 0)=\binom{\tanh (28(2 y-1))}{0}+c_n\binom{\partial_y \psi(x, y)}{-\partial_x \psi(x, y)},
$$
with the stream function defined as:
$$
\psi(x, y)=\exp \left(-28^2(y-0.5)^2\right)(\cos (8 \pi x)+\cos (20 \pi x)) .
$$
Here,  $\delta=\frac{1}{28}$ represents the initial vorticity thickness, and $c_n=10^{-3}$ is a scaling factor.
Given a fixed Reynolds number of $Re = 100$, the kinematic viscosity is determined to be $\nu=\delta/Re=1/2800$. We set $N=200$ and $\Delta t=5e-4$ for the simulations by using the Taylor-Hood finite element (P2-P1) for the velocity and pressure.
The evolutions of vorticity generated by the scheme \eqref{CN-SM-NS} at various time intervals are depicted in Fig. \ref{KH}. It is observed that four vortices gradually emerge from the initial conditions, exhibiting instability and a tendency to merge into two larger vortices. These two ellipsoidal vortices ultimately combine into a single, larger vortex by the final time $T=8$,  which aligns with the benchmark results presented in
\cite{john2005assessment,schneider2000numerical,schroeder2019reference}.
\begin{figure}[!t]
	\centering 
	\includegraphics[width=0.32\textwidth]{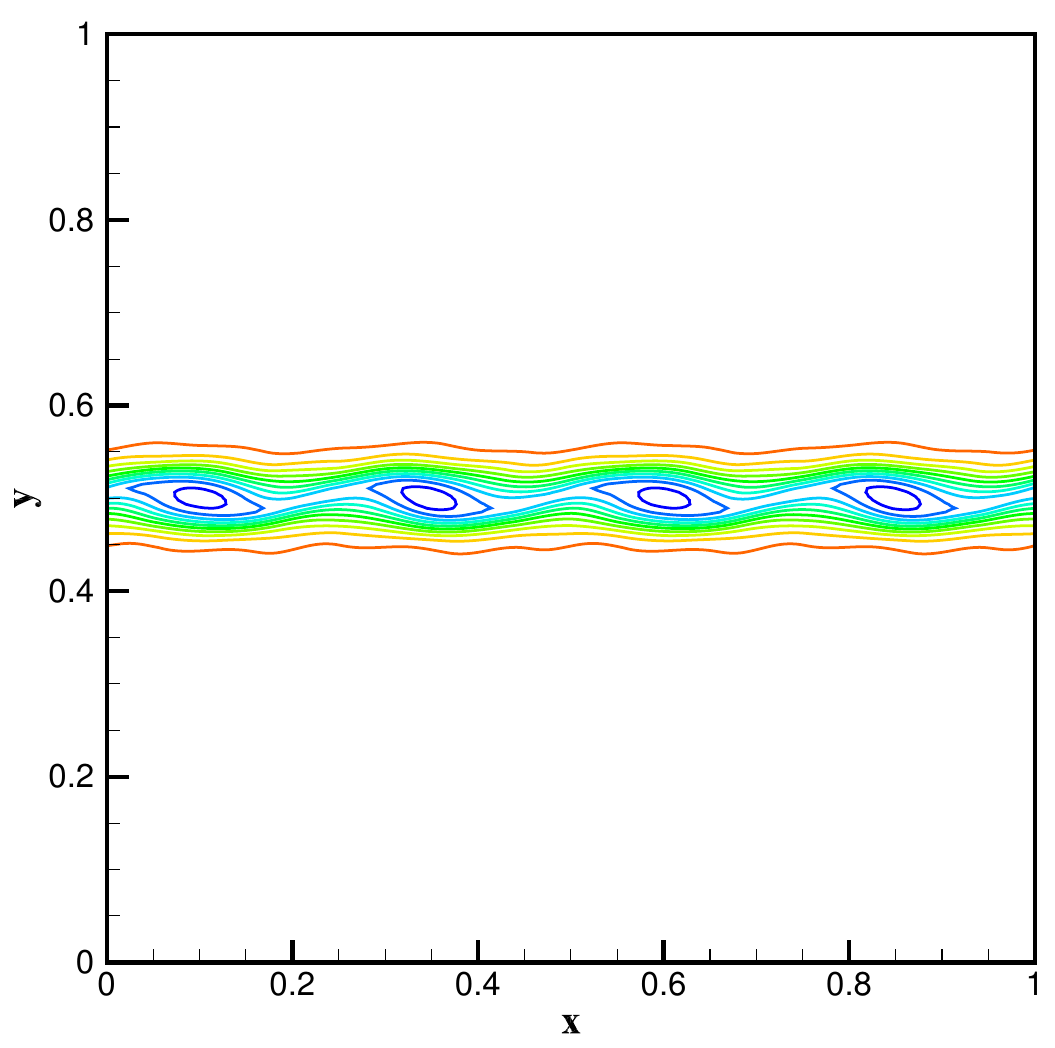}
	\includegraphics[width=0.32\textwidth]{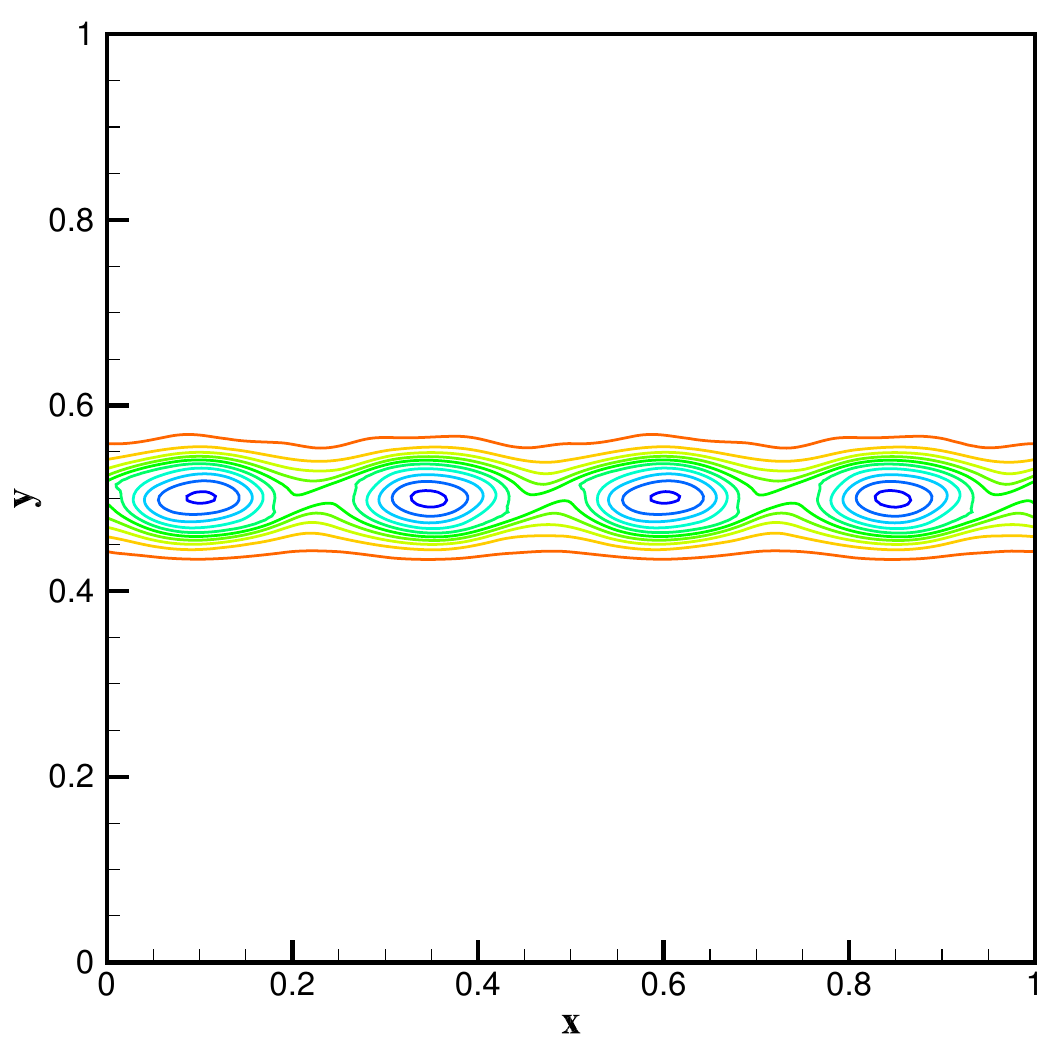}
	\includegraphics[width=0.32\textwidth]{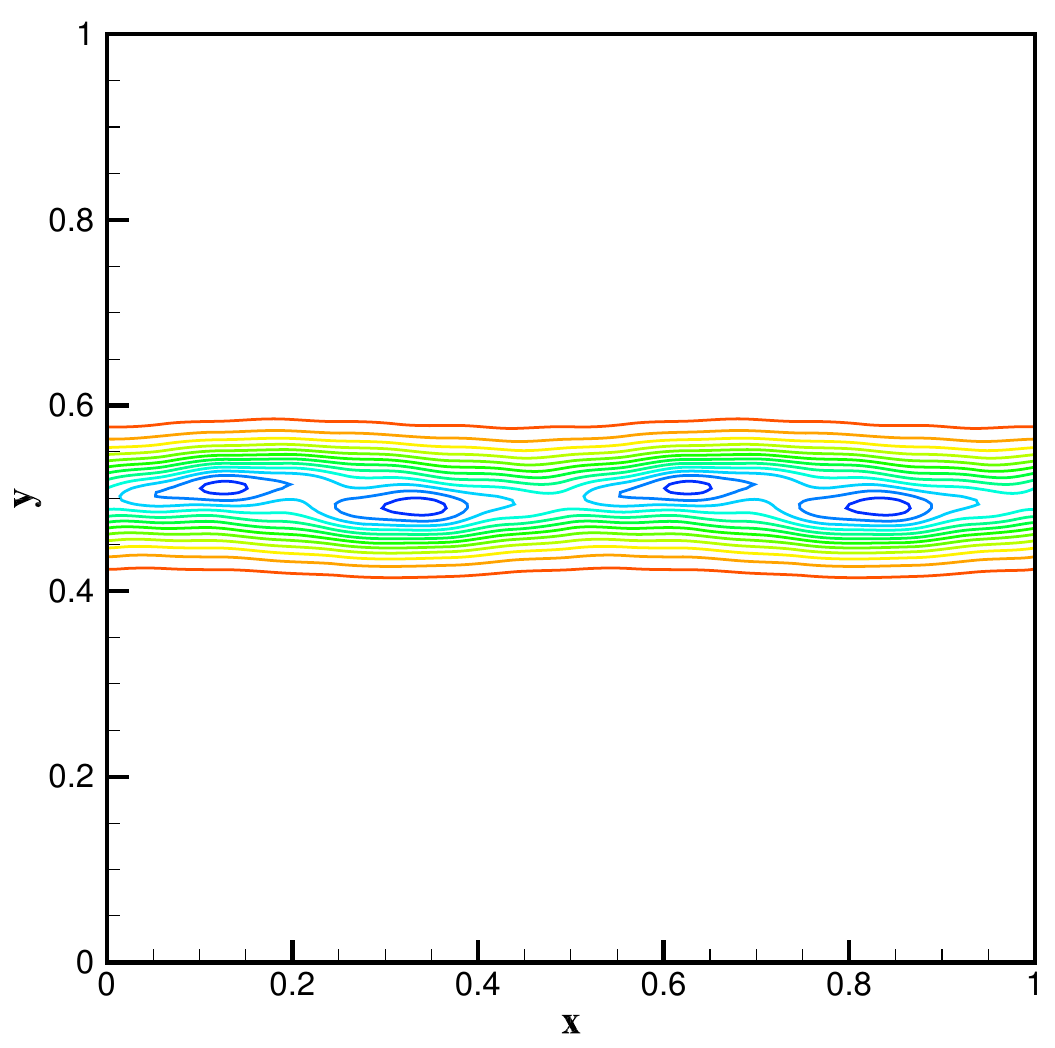}
	\includegraphics[width=0.32\textwidth]{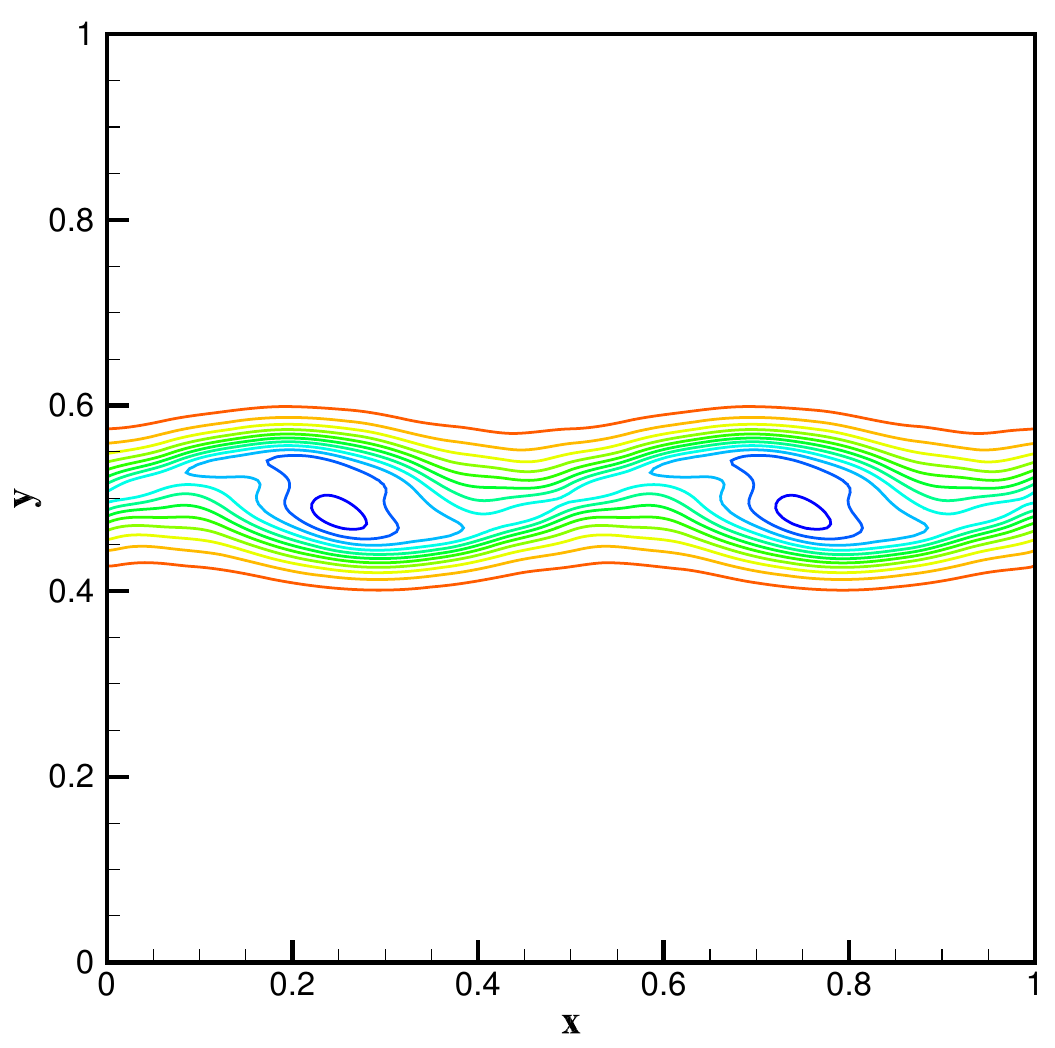}
	\includegraphics[width=0.32\textwidth]{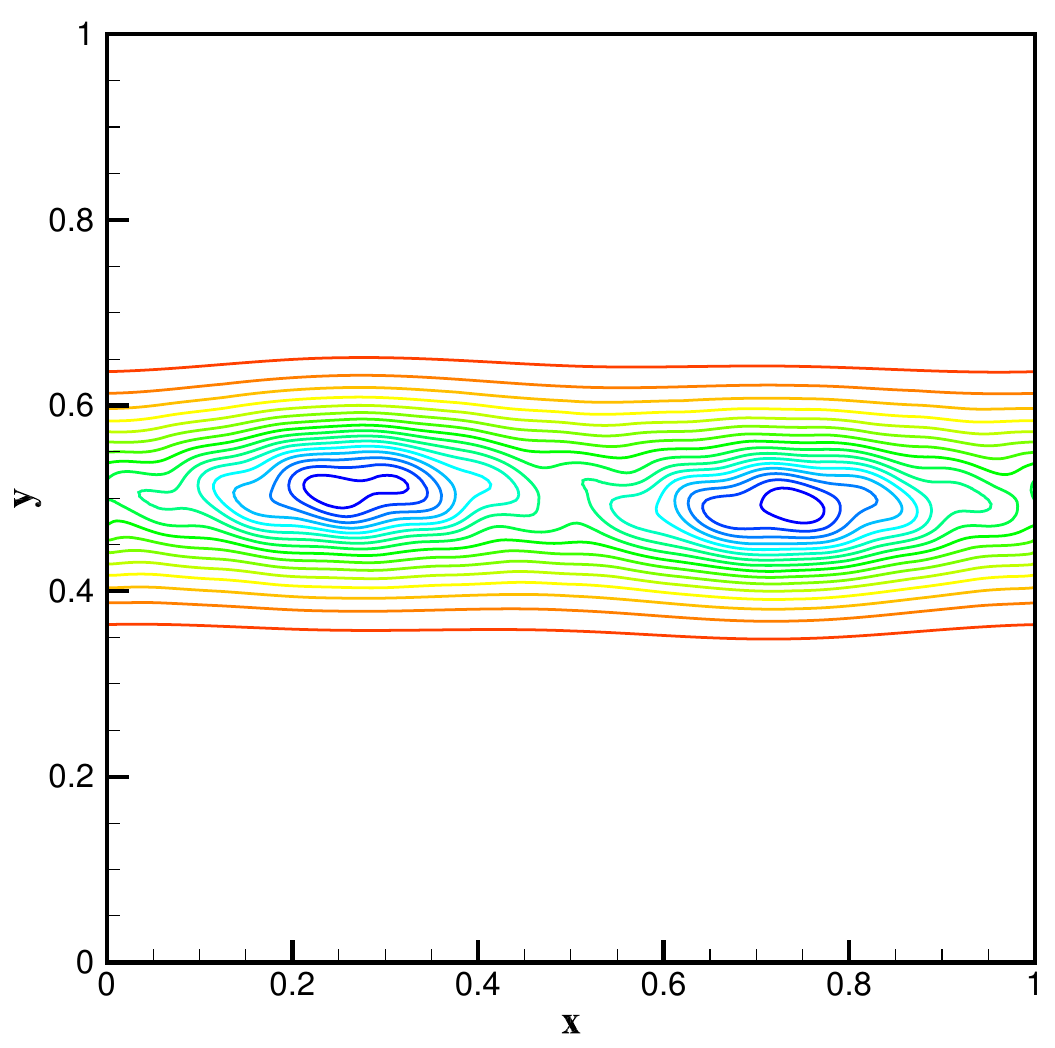}
	\includegraphics[width=0.32\textwidth]{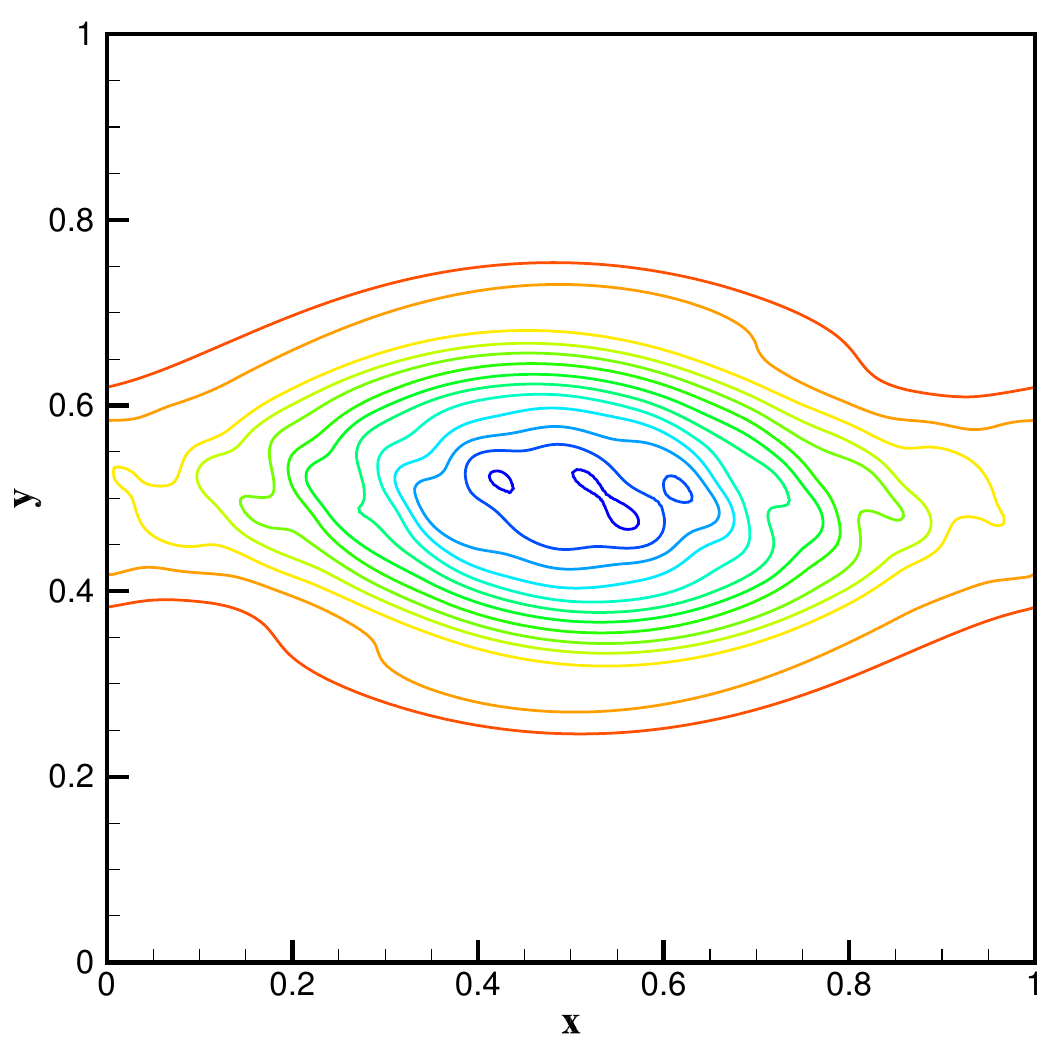}
	\caption{Evolution of the vorticity at different times $t=0.35,\ 0.6,\ 1.5,\  1.8,\ 5,\ 8$.
	}\label{KH}
\end{figure}
\subsection{Molecular beam epitaxial without slope selection}
In this section, we investigates the molecular beam epitaxial (MBE) model without slope selection \cite{cho1975molecular,schwoebel1966step,villain1991continuum}  in the domain $[0, 2\pi)^2$. The nonlinear functional associated with this model is not bounded from below. The classical no-slope MBE model is described by a fourth-order parabolic equation, which corresponds to the $L^2$ gradient flow of the Ehrlich-Schwoebel energy functional. This energy functional is defined as:
\begin{equation*}
	E(\phi)=\int_\Omega \frac{\epsilon^2}{2}|\Delta \phi|^2+F(\phi)d\textbf{x},
\end{equation*}
where $F(\phi)=-\frac{1}{2}\ln(1+|\nabla\phi|^2)$.
The evolution of the MBE model is governed by the equation:
\begin{equation*}
	\phi_t=-\mathcal{M}\frac{\delta E(\phi)}{\delta \phi}= -\mathcal{M}(\epsilon^2\Delta^2\phi+F'(\phi)),
\end{equation*}
Here, $\mathcal{M}$ is the mobility constant, and the nonlinear force vector is defined by 
$F'(\phi):=\nabla\cdot(\frac{\nabla\phi}{1+|\nabla\phi|^2})$. By taking the inner product with the test function $-\frac{1}{\mathcal{M}}\phi_t$, we derive
\begin{equation}
	\frac{d}{dt}E(\phi)=-\mathcal{K}(\phi).
\end{equation}
where $\mathcal{K}(\phi)=\frac{1}{\mathcal{M}}\|\phi_t\|^2$.
Similar to section \ref{NSeq}, we can readily develop unconditionally stable numerical schemes for the MBE equation without slope selection, utilizing the framework provided in \eqref{CN-SM3}. Specifically, by introducing a new auxiliary variable  $V(t)=\theta E(\phi)+C_0$, 
the scheme can be formulated as follows:
\begin{equation}\label{MBE_scheme}
	\begin{array}{l}
		\displaystyle\frac{\phi^{n+1}-\phi^{n}}{\Delta t}+\mathcal{M}\epsilon^2\Delta^2\frac{\phi^{n+1}+\phi^{n}}{2}+\mathcal{M}
		F'\left(\eta^{n+\frac12}\overline{\phi}^{n+\frac12}\right)=0,\\
		\displaystyle\frac{\arctan V^{n+
				\frac12}-\arctan V^{n-\frac12}}{\Delta t}=-\frac{\theta\mathcal{K}(\phi^n)}{\theta^2E_{tot}^2(\phi^n)+1},\\
		\displaystyle \eta^{n+\frac12}=\chi(V^{n+\frac12}).
	\end{array}
\end{equation}
\subsubsection{Convergence tests}
We begin by evaluating the convergence rates of the CN-SM scheme \eqref{MBE_scheme} using a specific example. Assume the exact solution is given by
\begin{equation}
	\phi(x,y,t)=\cos(x)\cos(y)exp(-t).
\end{equation}
The computational domain is defined as $[0, 2 \pi)^2$, and we discretize the spatial domain using a grid of $256 \times 256$ points. 
We first perform the mesh refinement test in time with parameters set to 
$\mathcal{M}=0.1$ and $\epsilon=0.1$ at $T= 1.0$.
We observe that all combinations of parameters lead to convergence rates that asymptotically match their expected orders in time in Fig. \ref{MBE_order}. Notably, the choice of $\theta=1$ gives better accuracy than $\theta=0.1$ or $0.01$ when using the same time step, although all three choices are of the same order of magnitude.
\begin{figure}[!t]
	\centering 
	\subfloat[Time step size vs. errors of $\phi$]{
		\begin{minipage}[c]{0.45\textwidth}
			\includegraphics[width=1\textwidth]{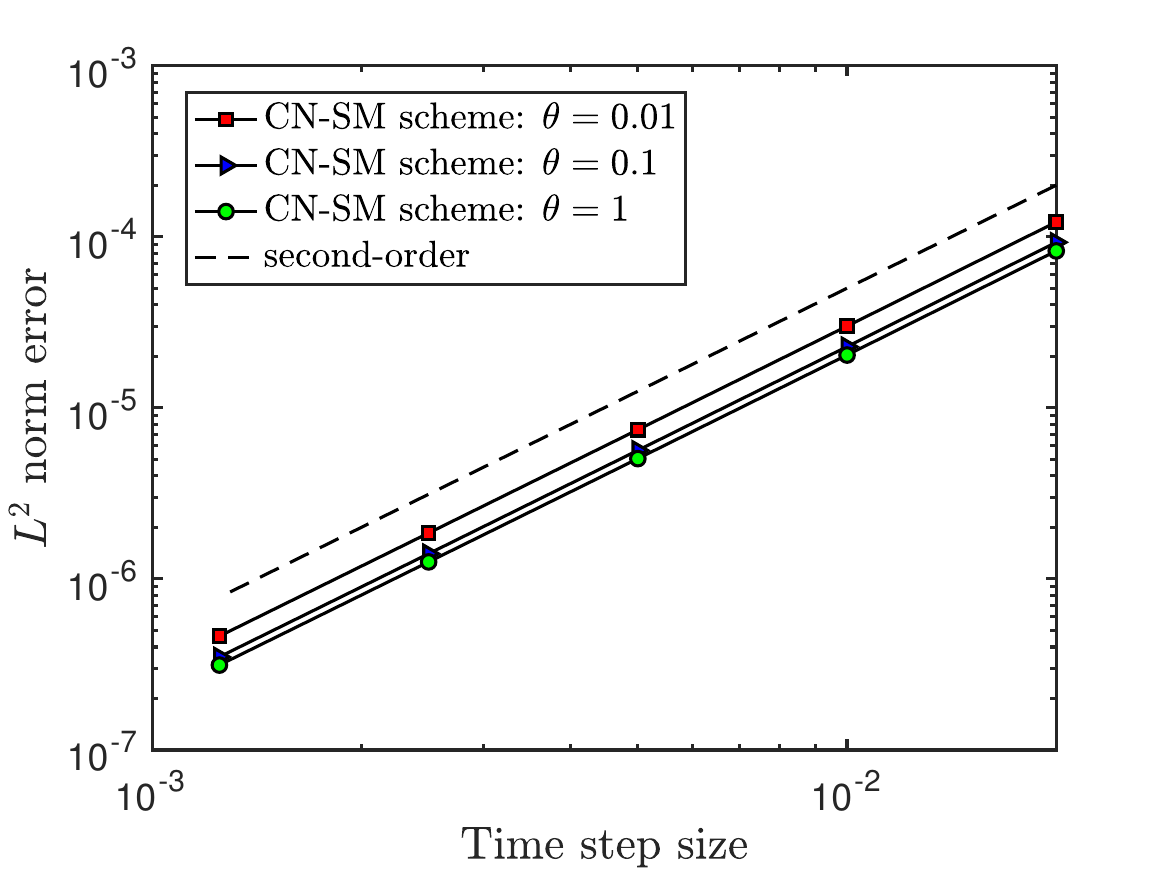}
		\end{minipage}
	}
	\subfloat[Time step size  vs. errors of $E$]{
		\begin{minipage}[c]{0.45\textwidth}
			\includegraphics[width=1\textwidth]{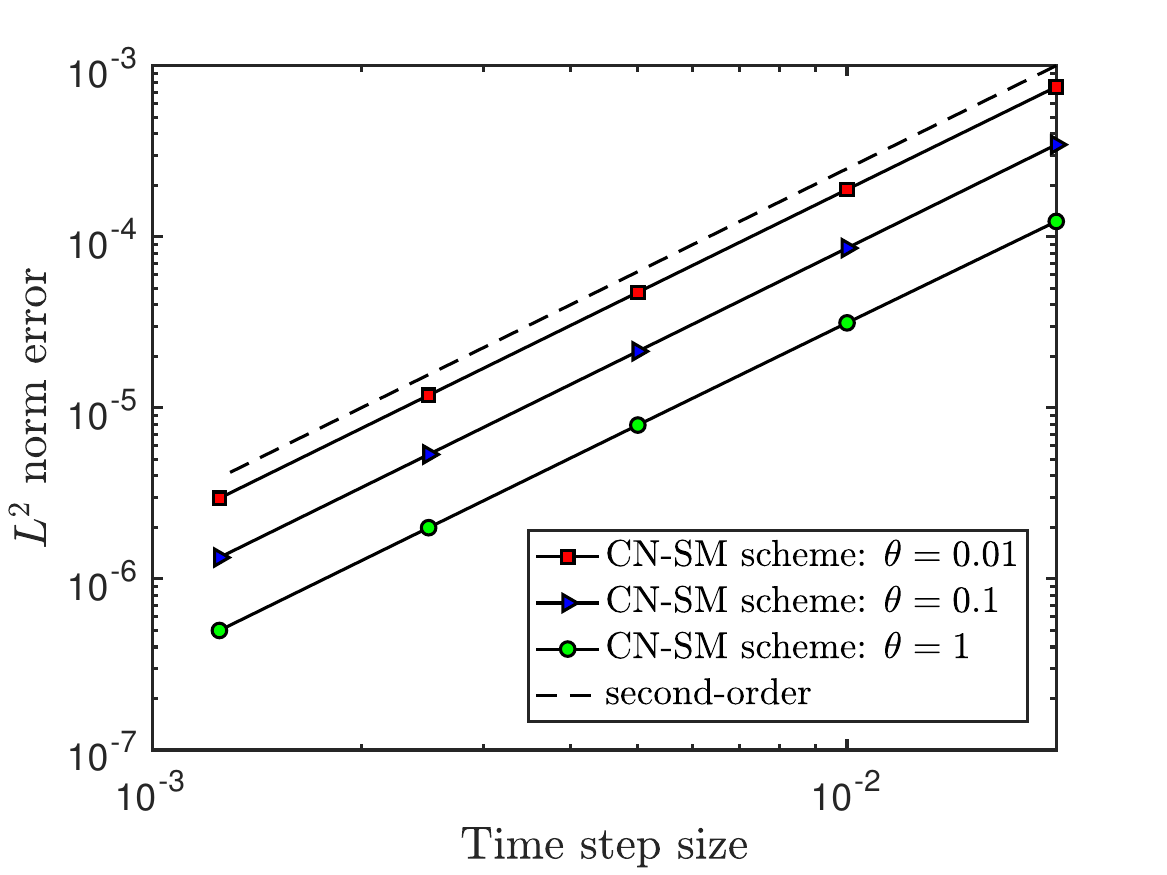}
		\end{minipage}
	}
	\caption{Numerical convergence rate of MBE model.
	}\label{MBE_order}
\end{figure}
\subsubsection{Coarsening dynamics}
In this example, we perform numerical simulations of coarsening dynamics in 2D.
The initial condition is specified as:
\begin{equation*}
	\phi(\boldsymbol{x},0)=0.001\text{rand}(\boldsymbol{x}),
\end{equation*}
where $\text{rand}(\boldsymbol{x})$ denotes random values within the interval $[-1, 1]^2$.
The simulations are carried out in the domain $[0,L)^2$ with $L=12.8$, and we discretize the spatial domain using a grid of $200 \times 200$ points. 
The parameters used  in the simulation are $\mathcal{M}=1$, $\epsilon=0.03$, $\theta=0.01$ and $C_0=10^5$ with a time step of $\Delta t=1\times10^{-3}$. 
To obtain the deviation of the height function, we define the roughness measure function $W(t)$ as:
$$W(t)=\sqrt{\frac{1}{|\Omega|} \int_{\Omega}(\phi(\boldsymbol{x}, t)-\bar{\phi}(\boldsymbol{x}, t))^2 d \boldsymbol{x}},$$
where 
$\bar{\phi}(t)=\frac{1}{\Omega} \int_{\Omega} \phi(\boldsymbol{x}, t) d \boldsymbol{x}$ is the spatial average at $t$.

We illustrate the isolines of the numerical solutions for the height function $\phi$ and its Laplacian $\Delta \phi$ for the model without slope selection in Fig. \ref{MBE}.
As depicted in Fig. \ref{MBE_EW}(a), the energy demonstrates a rapid decay that approximates the function  $-log_{10}(t)$,  which is consistent with the predictions made in \cite{wang2010unconditionally}. 
The growth rate of the roughness, shown in Fig. \ref{MBE_EW}(b), behaves as $t^{1/2}$.
Notably, these numerical solutions reveal features that closely align with those obtained in \cite{wang2010unconditionally}, where the convex splitting technique was used.

\begin{figure}[!t]
	\centering
	\subfloat[T=0]{
		\begin{minipage}[c]{0.45\textwidth}
			\includegraphics[width=1\textwidth]{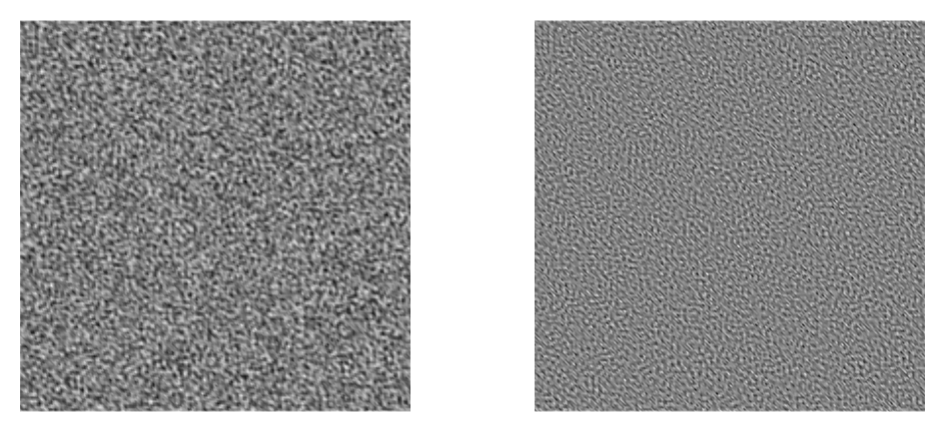}
		\end{minipage}
	}
	\subfloat[T=1]{
		\begin{minipage}[c]{0.45\textwidth}
			\includegraphics[width=1\textwidth]{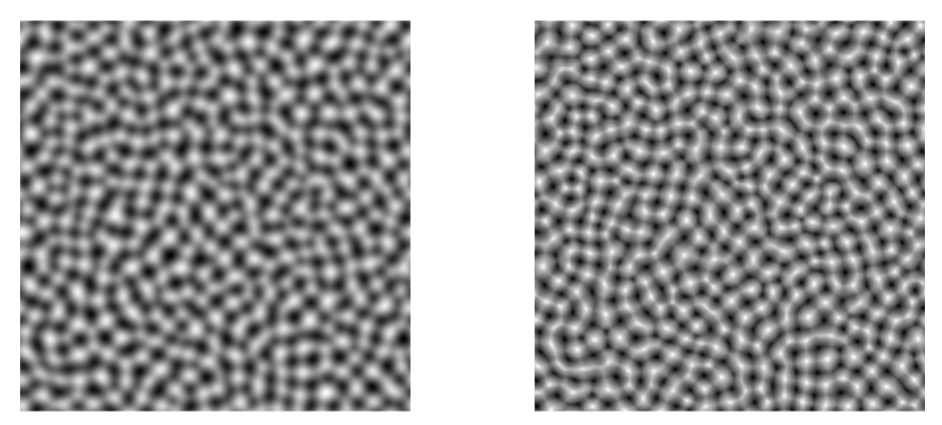}
		\end{minipage}
	}\\
	\subfloat[T=10]{
		\begin{minipage}[c]{0.45\textwidth}
			\includegraphics[width=1\textwidth]{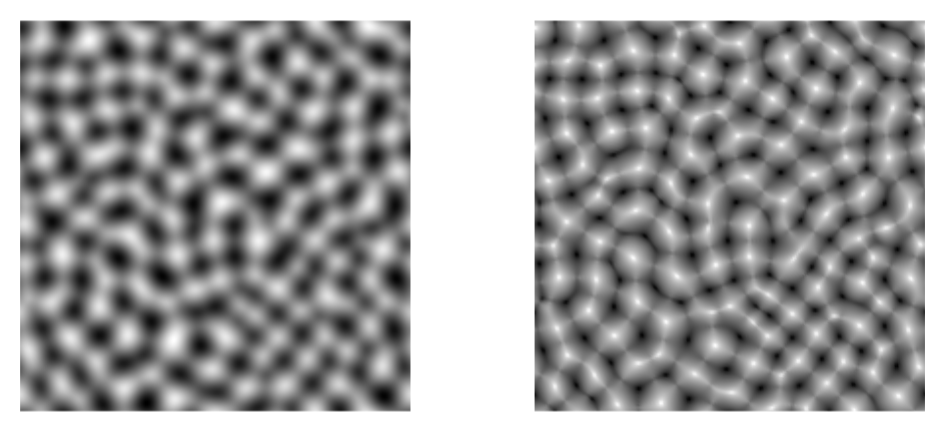}
		\end{minipage}
	}
	\subfloat[T=50]{
		\begin{minipage}[c]{0.45\textwidth}
			\includegraphics[width=1\textwidth]{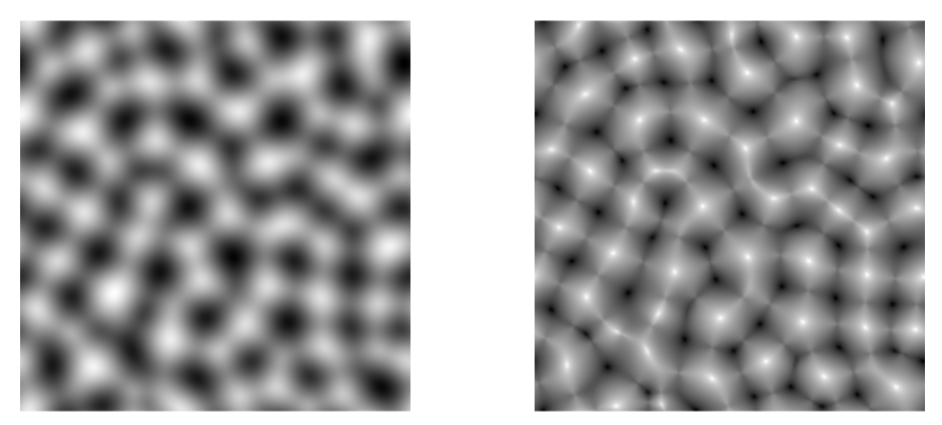}
		\end{minipage}
	}\\
	\subfloat[T=100]{
		\begin{minipage}[c]{0.45\textwidth}
			\includegraphics[width=1\textwidth]{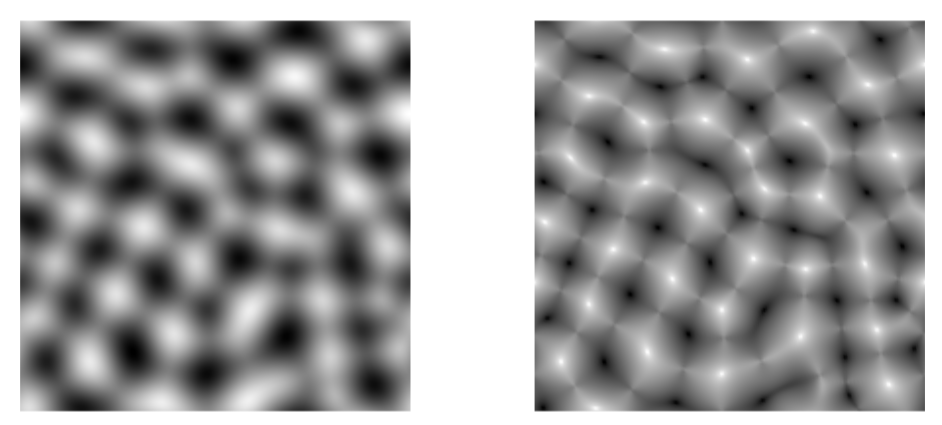}
		\end{minipage}
	}
	\subfloat[T=500]{
		\begin{minipage}[c]{0.45\textwidth}
			\includegraphics[width=1\textwidth]{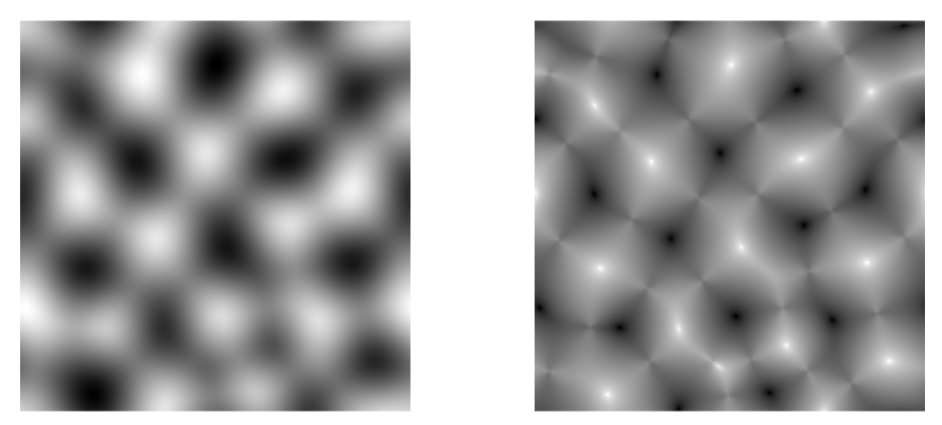}
		\end{minipage}
	}
	{\caption{The left subfigure represents $\phi$ and the right subfigure represents $\Delta \phi$. Each snapshot is taken at $T = 0, 1, 10, 50, 100, 500$.}\label{MBE}}
\end{figure}

\begin{figure}[!t]
	\centering 
	\subfloat[]{
		\begin{minipage}[c]{0.45\textwidth}
			\includegraphics[width=1\textwidth]{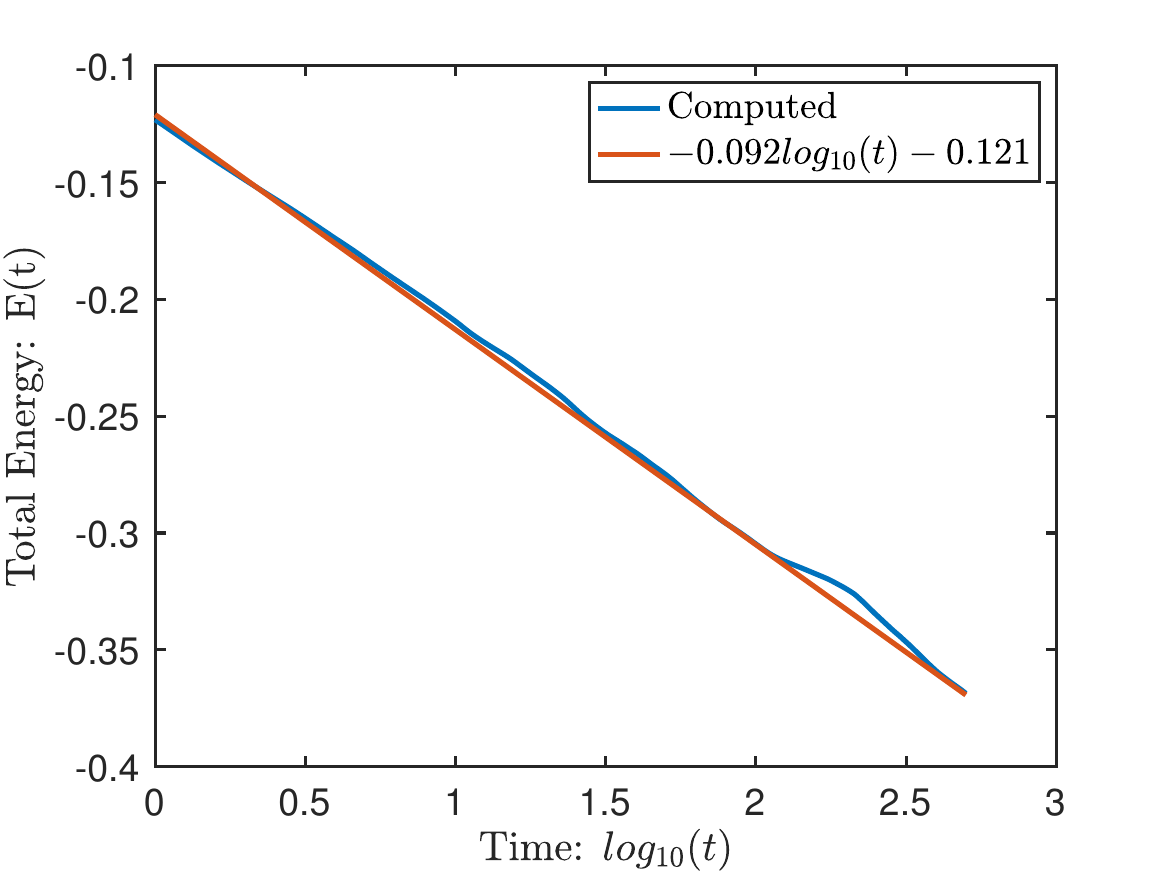}
		\end{minipage}
	}
	\subfloat[]{
		\begin{minipage}[c]{0.45\textwidth}
			\includegraphics[width=1\textwidth]{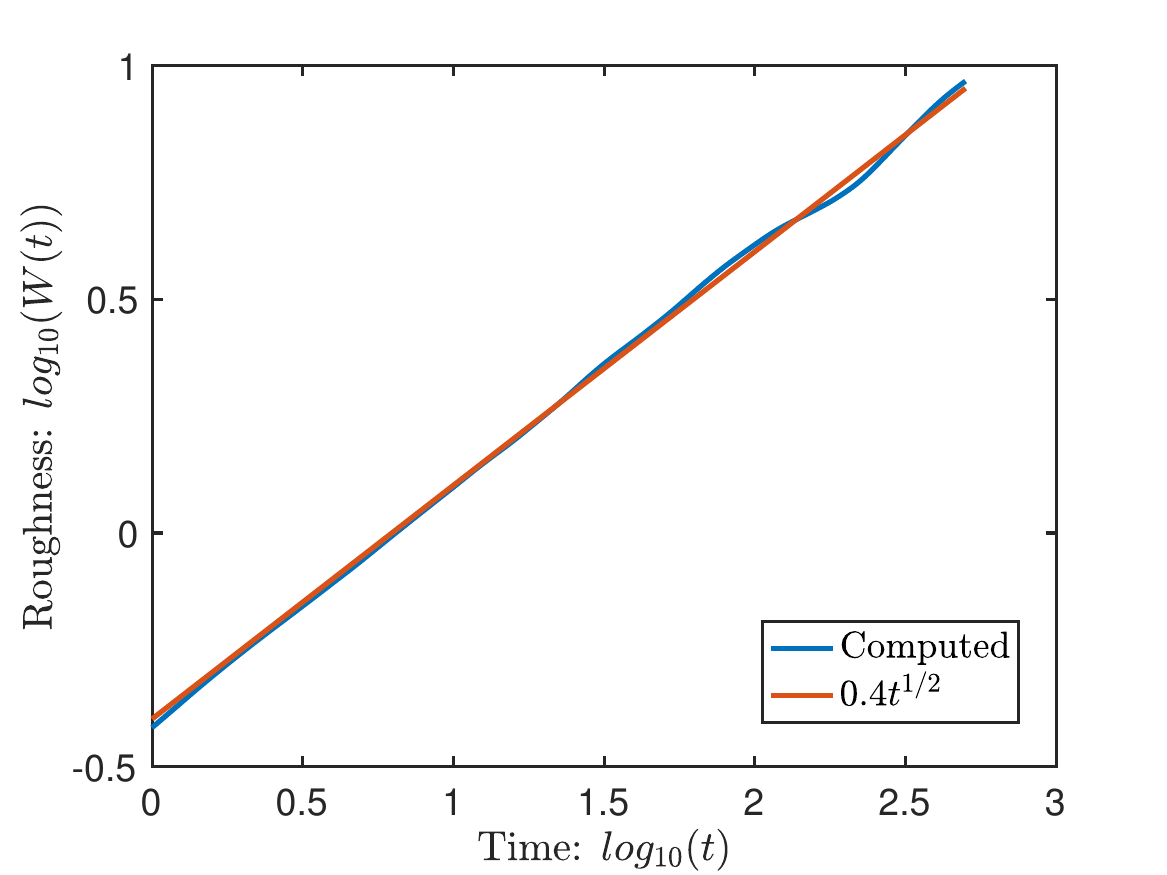}
		\end{minipage}
	}
	\caption{(a) Semi-log plot of energy, and (b) log-log plot of the standard deviation of $\phi$, denoted as $W(t)$.
		The energy decays at a rate of $-log_{10}(t)$, and $W(t)$ increases like $t^{1/2}$. 
	}\label{MBE_EW}
\end{figure}

\subsection{Ternary Cahn-Hilliard phase-field model}
This subsection presents the three-phase Cahn-Hilliard phase-field model, with the free energy for this system expressed as follows:
\begin{equation}\label{energy}
	\mathcal{E}\left[\phi_1, \phi_2, \phi_3\right]=\frac{3 \epsilon^2}{8} \sum_{l=1}^3 \int_{\Omega} \Sigma_l\left|\nabla \phi_l\right|^2 \textbf{dx}+12 \int_{\Omega} F\left(\phi_1, \phi_2, \phi_3\right) \textbf{dx}.
\end{equation}
The relationship among the phase fields is governed by the condition:
$$
\phi_1+\phi_2+\phi_3=1,
$$
The numerical approximation of the three-phase Cahn-Hilliard system is challenging due to the nonlinear terms present in the model, as outlined by Boyer et al \cite{boyer2006study,boyer2011numerical}. To address this issue, we will apply the Staggered mesh approach, as detailed in Section \ref{SMmethod}.

For consistency with the two-phase system, the surface tension parameters $\sigma_{12}, \sigma_{13}$ and $\sigma_{23}$ must satisfy the following equations:
$$
\Sigma_1=\sigma_{12}+\sigma_{13}-\sigma_{23}, \quad \Sigma_2=\sigma_{12}+\sigma_{23}-\sigma_{13}, \quad \Sigma_3=\sigma_{13}+\sigma_{23}-\sigma_{12} .
$$
In addition, the volume conservation constraint, expressed as $\phi_3=1-\phi_1-\phi_2$, allows us to reformulate the energy functional \eqref{energy} as follows:
$$
\mathcal{E}\left[\phi_1, \phi_2\right]= \int_{\Omega} \frac{3 \epsilon^2}{8} \left(\Sigma_1\left|\nabla \phi_1\right|^2+\Sigma_2\left|\nabla \phi_2\right|^2+\Sigma_3\left|\nabla \phi_1+\nabla \phi_2\right|^2\right) +12 F\left(\phi_1, \phi_2\right) \textbf{dx},
$$
where $F\left(\phi_1, \phi_2\right)$ is defined as:
\begin{align*}
F\left(\phi_1, \phi_2\right)=&\frac{\Sigma_1}{2} \phi_1^2\left(1-\phi_1\right)^2+\frac{\Sigma_2}{2} \phi_2^2\left(1-\phi_2\right)^2+\frac{\Sigma_3}{2}\left(\phi_1+\phi_2\right)^2\left(1-\phi_1-\phi_2\right)^2\\
&+3 \Lambda \phi_1^2 \phi_2^2\left(1-\phi_1-\phi_2\right)^2,
\end{align*}
where $\Lambda$ is a non-negative constant.

We investigate the coupled Cahn-Hilliard equations, which can be formulated as follows:
\begin{subequations}\label{cch}
	\begin{align}
		& \partial_t \phi_l=\mathcal{M} \Delta \frac{\mu_l}{\Sigma_l}, \quad l=1,2, \label{cch11}
		\\
		& \mu_1=-\frac{3 \epsilon^2}{4}\left(\Sigma_1+\Sigma_3\right) \Delta \phi_1-\frac{3 \epsilon^2}{4} \Sigma_3 \Delta \phi_2+12 \frac{\partial F\left(\phi_1, \phi_2\right)}{\partial \phi_1}, \label{cch22}\\
		& \mu_2=-\frac{3 \epsilon^2}{4} \Sigma_3 \Delta \phi_1-\frac{3 \epsilon^2}{4}\left(\Sigma_2+\Sigma_3\right) \Delta \phi_2+12 \frac{\partial F\left(\phi_1, \phi_2\right)}{\partial \phi_2}.\label{cch33}
	\end{align}
\end{subequations}
Initial conditions are specified as follows:
$$
\left.\phi_l(\textbf{x}, t)\right|_{t=0}=\phi_l^0(\textbf{x}), \quad l=1,2, \quad \phi_3^0=1-\phi_1^0(\textbf{x})-\phi_2^0(\textbf{x}) .
$$
By taking the inner product of  \eqref{cch11} with $\mu_1$, and $\mu_2$, and of \eqref{cch22} with $-\partial_t \phi_1$,  as well as  \eqref{cch33} with $-\partial_t \phi_2$, results in the energy dissipation law:
$$
\frac{\mathrm{d}}{\mathrm{d} t}E(\phi_1,\phi_2)=
-\mathcal{K}(\phi_1,\phi_2)\leq 0 .
$$
where $$\mathcal{K}(\phi_1,\phi_2)=\mathcal{M}\left(\frac{1}{\Sigma_1}\left\|\nabla \mu_1\right\|^2+\frac{1}{\Sigma_2}\left\|\nabla \mu_2\right\|^2\right).$$
We define an auxiliary variable $V(t)=E(\phi_1,\phi_2)+C_0$, where $C_0 \geq 0$ ensures  that $V(t)>0$. 
Correspondingly, efficient and unconditionally stable numerical schemes can be implemented as follows:
\begin{equation}\label{cch1}
	\begin{aligned}
		& \frac{\phi_l^{n+1}-\phi_l^n}{\Delta t}= \frac{M}{\Sigma_l}\Delta \mu_l^{n+\frac{1}{2}}, \quad l=1,2,\\
		&	\mu_1^{n+\frac{1}{2}}=-\frac{3 \epsilon^2}{4}\left(\Sigma_1+\Sigma_3\right) \Delta \phi_1^{n+\frac{1}{2}}-\frac{3 \epsilon^2}{4} \Sigma_3 \Delta \phi_2^{n+\frac{1}{2}}+12\eta^{n+\frac12}\frac{\partial F}{\partial \phi_1}\left(\bar{\phi}_1^{n+\frac12}, \bar{\phi}_2^{n+\frac12}\right), \\
		&\mu_2^{n+\frac{1}{2}}=-\frac{3 \epsilon^2}{4} \Sigma_3 \Delta \phi_1^{n+\frac{1}{2}}
		-\frac{3 \epsilon^2}{4}(\Sigma_2+\Sigma_3) \Delta \phi_2^{n+\frac{1}{2}}+12\eta^{n+\frac12} \frac{\partial F}{\partial \phi_2}\left(\bar{\phi}_1^{n+\frac12}, \bar{\phi}_2^{n+\frac12}\right),\\
		&\displaystyle\frac{\ln V^{n+
				\frac12}-\ln V^{n-\frac12}}{\Delta t}=-\frac{\mathcal{K}(\phi^n_1,\phi^n_2)}{E(\phi^n_1,\phi^n_2)},\\
		&\displaystyle\eta^{n+\frac12}=\chi(V^{n+\frac12}).
	\end{aligned}
\end{equation}

\subsubsection{Accuracy test} 
This section illustrates the accuracy of the ternary Cahn-Hilliard system using the following initial conditions:
$$
\begin{aligned}
	& \phi_i^0(x, y)=\frac{1}{2}\left(1+\tanh \left(\frac{r_i-\sqrt{\left(x-x_i\right)^2+\left(y-y_i\right)^2}}{\epsilon}\right)\right), \quad i=1,2, \\
	& \phi_3^0(x, y)=1-\phi_1^0(x, y)-\phi_2^0(x, y),
\end{aligned}
$$
where $r_1=r_2=0.35,\ x_1=1.37,\ x_2=0.63$ and $y_1=y_2=1.0$. The computational domain is specified as $\Omega=[0,2]^2$,
and spatial variables are discretized on a $256\times256$ grid.
The parameters utilized in the simulation include $\mathcal{M}=10^{-5},\ \epsilon=0.02$, and $\Lambda=7$.
The corresponding $L^2$ errors calculated from \eqref{cch1} at $T=0.1$ with various surface tension strengths are 
presented in Fig. \ref{TCH}(a), with a reference solution derived from a time step of $\Delta t=1.0 \times 10^{-5}$. 

\begin{figure}[htp]
	\centering
	\subfloat[]{
		\begin{minipage}[c]{0.45\textwidth}
			\includegraphics[width=1\textwidth]{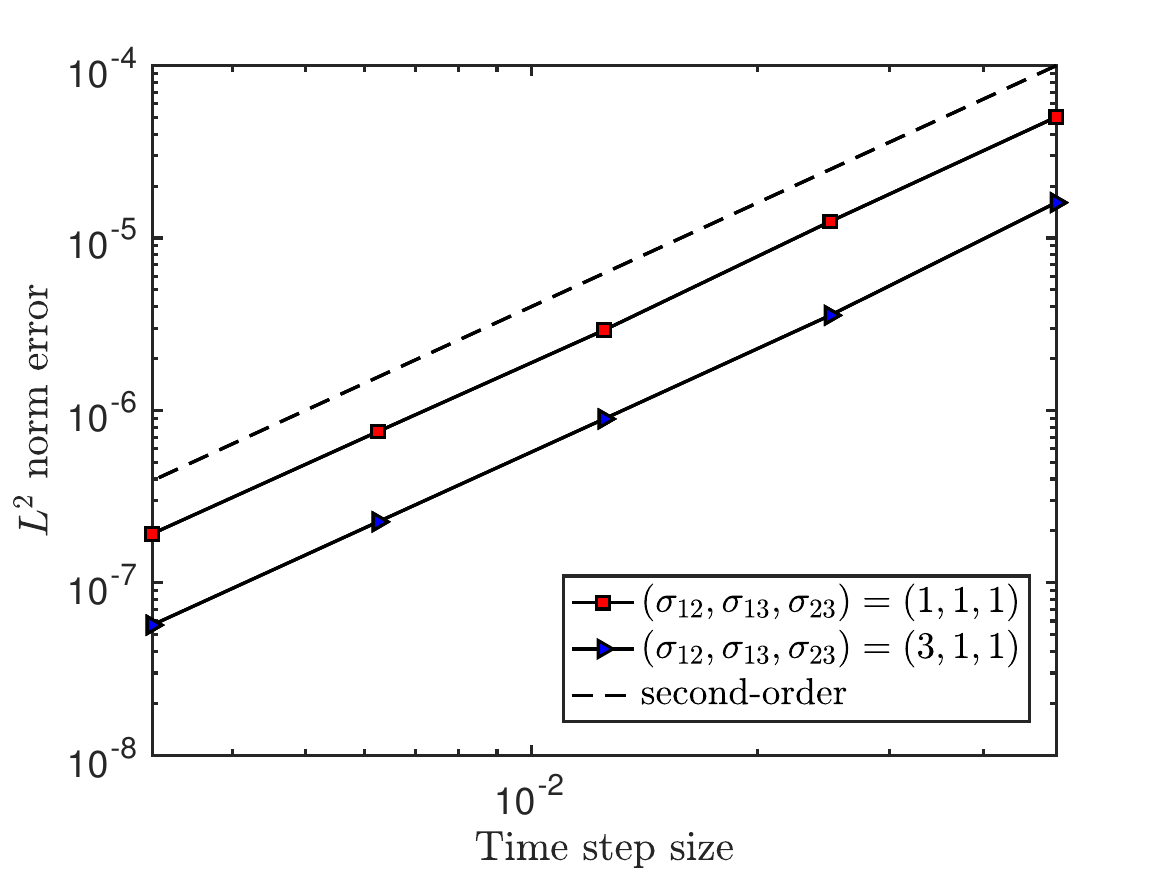}
		\end{minipage}
	}
	\subfloat[]{
		\begin{minipage}[c]{0.45\textwidth}
			\includegraphics[width=1\textwidth]{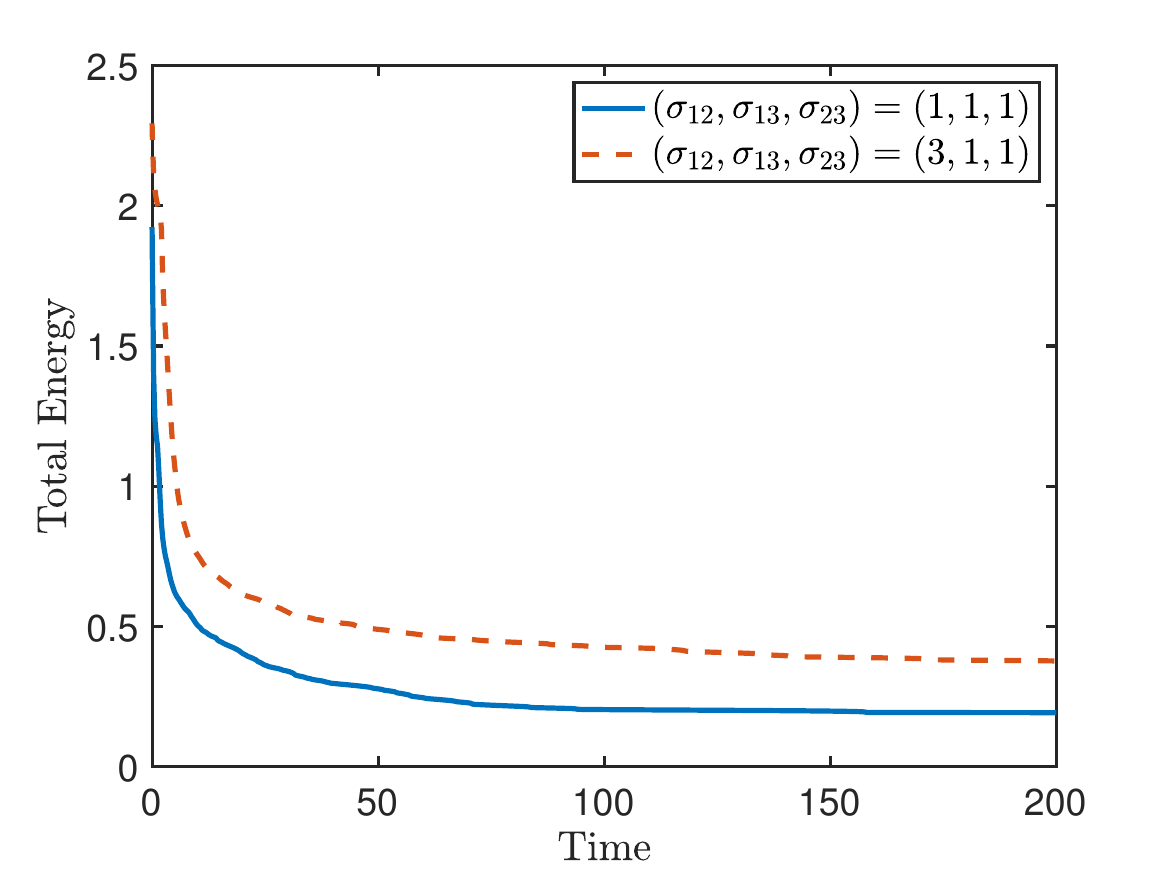}
		\end{minipage}
	}
	{\caption{(a) Numerical convergence rate  for the ternary Cahn-Hilliard phase-field model. (b)Temporal evolution of energy for the ternary Cahn-Hilliard phase-field model.}\label{TCH}}
\end{figure}

\subsubsection{Spinodal decomposition}
We study spinodal decomposition by observing the transformation of a uniform three-phase mixture into distinct phases as concentration fluctuations grow.
The initial conditions are specified as
$$
\begin{aligned}
	& \phi_1(x, y, 0)=0.5\left(\frac{y}{2}+0.25\right)+0.001 \operatorname{rand}(x, y), \\
	& \phi_2(x, y, 0)=0.5\left(\frac{y}{2}+0.25\right)+0.001 \operatorname{rand}(x, y), \\
	& \phi_3(x, y, 0)=1-\phi_1(x, y, 0)-\phi_2(x, y, 0),
\end{aligned}
$$
where $\text{rand}(x,y)$ denotes random values within the range $[-1, 1]^2$. 
The computational domain is defined as 
$\Omega=[0,2] \times[0,1]$, and spatial variables are discretized using $256\times 128$ grid points. The parameters are set to $\mathcal{M}=10^{-3}$, $\epsilon=0.025$,  and $\Lambda=7$. Fig. \ref{TCH}(b) illustrates the energy evolution over time in the ternary Cahn-Hilliard phase-field model, using a time step of $\Delta t=2.0\times 10^{-4}$. In addition, we test various surface tension strengths, such as  $\left(\sigma_{12}, \sigma_{13}, \sigma_{23}\right)=(1,1,1)$ and $\left(\sigma_{12}, \sigma_{13}, \sigma_{23}\right)=(3,1,1)$, as shown in Fig. \ref{TCH1} and Fig. \ref{TCH2}, which are consistent with \cite{hong2023high}.

\begin{figure}[htp]
	\centering
	\subfloat[T=10]{
		\begin{minipage}[c]{0.3\textwidth}
			\includegraphics[width=1\textwidth]{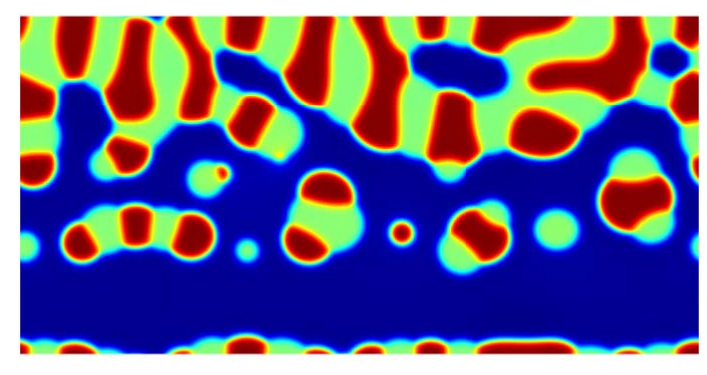}
		\end{minipage}
	}
	\subfloat[T=20]{
		\begin{minipage}[c]{0.3\textwidth}
			\includegraphics[width=1\textwidth]{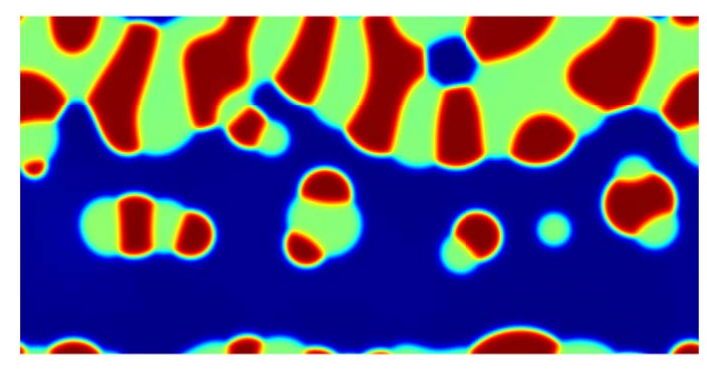}
		\end{minipage}
	}
	\subfloat[T=50]{
		\begin{minipage}[c]{0.3\textwidth}
			\includegraphics[width=1\textwidth]{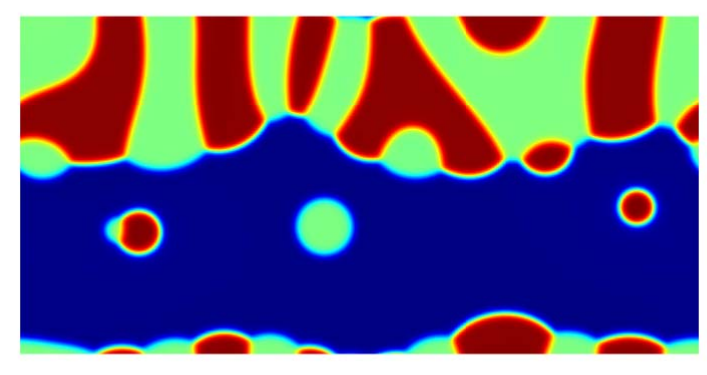}
		\end{minipage}
	}\\
	\subfloat[T=100]{
		\begin{minipage}[c]{0.3\textwidth}
			\includegraphics[width=1\textwidth]{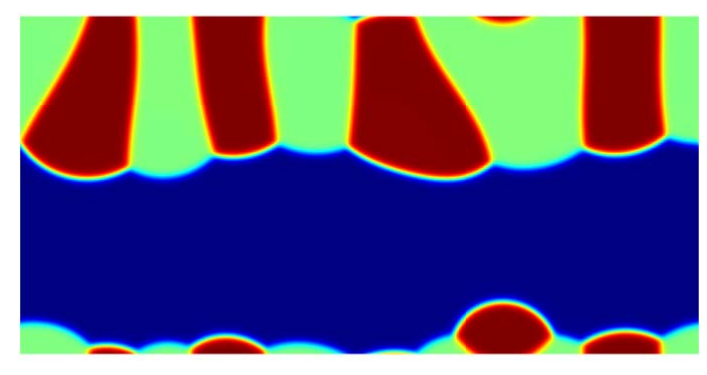}
		\end{minipage}
	}
	\subfloat[T=150]{
		\begin{minipage}[c]{0.3\textwidth}
			\includegraphics[width=1\textwidth]{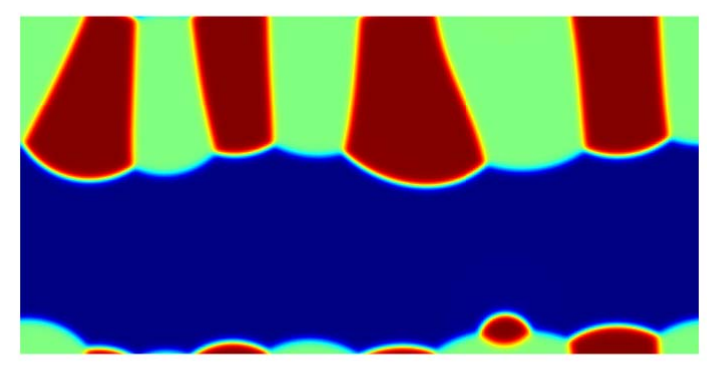}
		\end{minipage}
	}
	\subfloat[T=200]{
		\begin{minipage}[c]{0.3\textwidth}
			\includegraphics[width=1\textwidth]{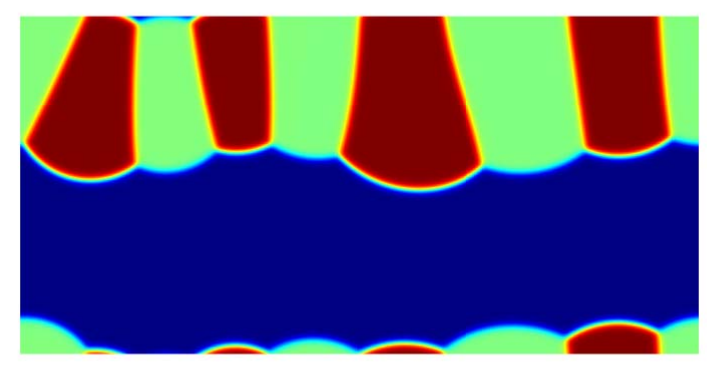}
		\end{minipage}
	}
	{\caption{Dynamical evolutions of the profile $\frac{1}{2}\phi_1+\phi_2$ for the spinodal decomposition examples with $\left(\sigma_{12}, \sigma_{13}, \sigma_{23}\right)=(1,1,1)$. }\label{TCH1}}
\end{figure}

\begin{figure}[htp]
	\centering
	\subfloat[T=10]{
		\begin{minipage}[c]{0.3\textwidth}
			\includegraphics[width=1\textwidth]{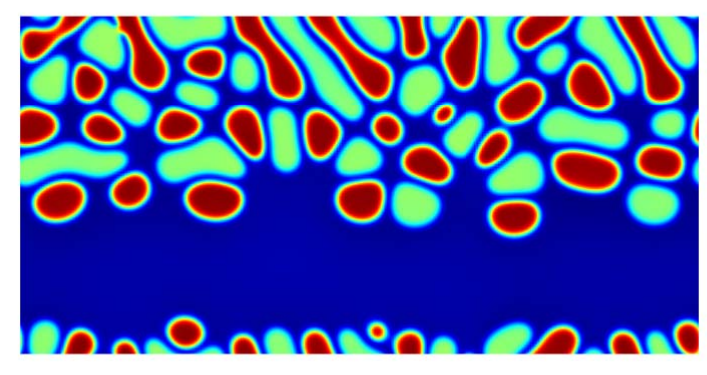}
		\end{minipage}
	}
	\subfloat[T=20]{
		\begin{minipage}[c]{0.3\textwidth}
			\includegraphics[width=1\textwidth]{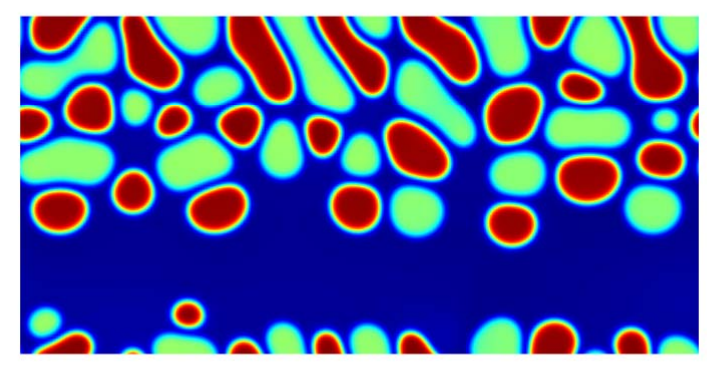}
		\end{minipage}
	}
	\subfloat[T=50]{
		\begin{minipage}[c]{0.3\textwidth}
			\includegraphics[width=1\textwidth]{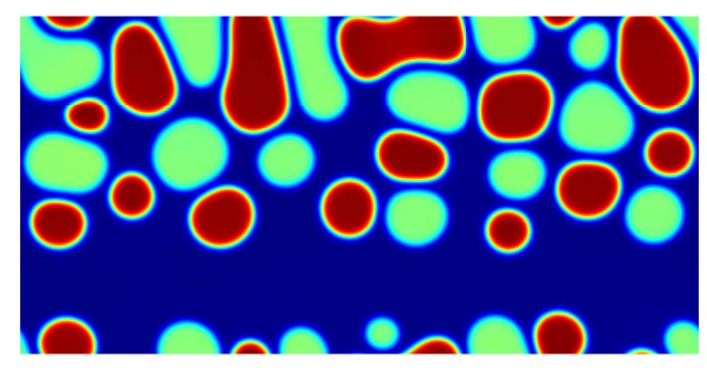}
		\end{minipage}
	}\\
	\subfloat[T=100]{
		\begin{minipage}[c]{0.3\textwidth}
			\includegraphics[width=1\textwidth]{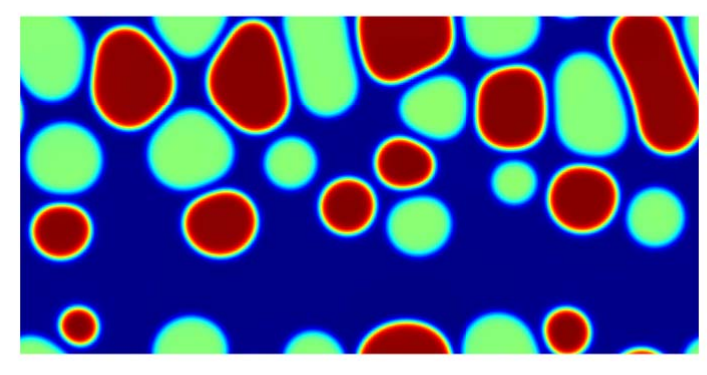}
		\end{minipage}
	}
	\subfloat[T=150]{
		\begin{minipage}[c]{0.3\textwidth}
			\includegraphics[width=1\textwidth]{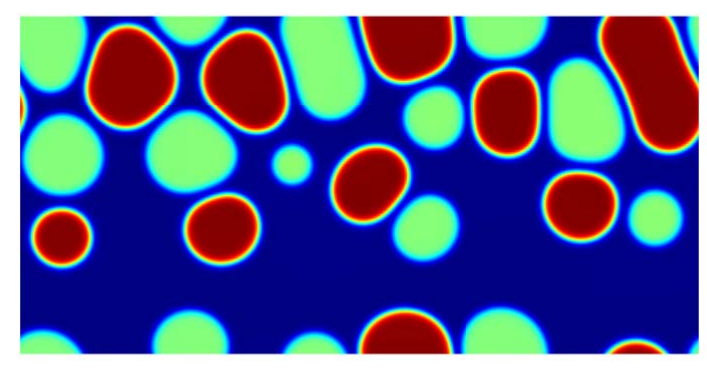}
		\end{minipage}
	}
	\subfloat[T=200]{
		\begin{minipage}[c]{0.3\textwidth}
			\includegraphics[width=1\textwidth]{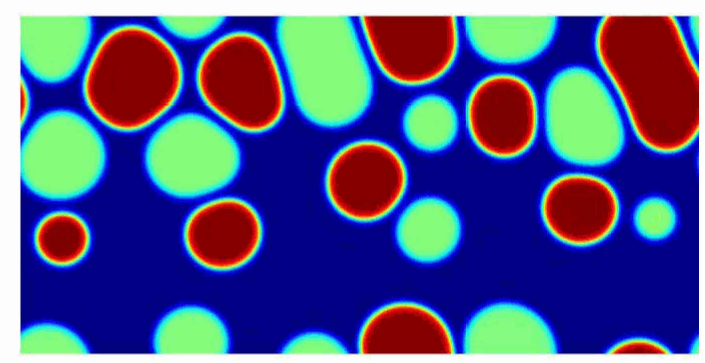}
		\end{minipage}
	}
	{\caption{Dynamical evolutions of the profile $\frac{1}{2}\phi_1+\phi_2$ for the spinodal decomposition examples with $\left(\sigma_{12}, \sigma_{13}, \sigma_{23}\right)=(3,1,1)$. }\label{TCH2}}
\end{figure}

\section{Concluding remarks} 
The recently SAV-type approach mostly needs to assume that the total energy or the nonlinear energy functional has a clear lower bound. For the classic dissipative systems such as Allen-Cahn and Cahn-Hilliard model, we know that their total energy and nonlinear energy functional are all non-negative for a double-well potential. This means that it is not necessary to choose a very large constant $C$ to ensure the positivity of the auxiliary variables and the discrete energy when performing numerical calculations. However, for some dissipative systems such as phase field crystal model and molecular beam epitaxial model, the energy lower bounds are not clear before calculation. We have to introduce a relatively big constant $C$ to ensure $E_{tot}(u)+C>0$. Once $C$ is not chosen large enough, the results of the SAV-based algorithms may not be accurate enough.   

In this paper, we propose several staggered mesh schemes for general nonlinear dissipative system with known and unknown energy lower bounds. By introducing different auxiliary variables and discretize them on staggered meshes in time, we construct several linear, decoupled and unconditional energy stable schemes no matter the energy has clear lower bounds or not for dissipative systems. The new proposed schemes can not only preserve the positivity of the introduced auxiliary variables, but also be able to get a second-order approximation to the energy. Besides, we give several proper choices for the introduced auxiliary variables $V(t)$ and $\eta(t)$ to simulate various complex dissipative systems. In numerical examples, by a comparison study with other numerical methods such as Lagrange Multiplier method and GSAV method, it fully shows the advantages of our proposed schemes in accuracy and computational efficiency. It is worth mentioning that the new SM schemes also show excellent results when simulating some complex 
benchmark problems for the Navier-Stokes equations.    	
\section*{Acknowledgement}
No potential conflict of interest was reported by the author. We would like to acknowledge the assistance of volunteers in putting together this example manuscript and supplement.

\bibliographystyle{siamplain}
\bibliography{SM}

\end{document}